\newtheorem{thm}{Theorem}
\newtheorem{prop}{Proposition}
\newtheorem{corollary}{Corollary}
\newtheorem{lemma}{Lemma}
\theoremstyle{definition}
\newtheorem{example}{Example}
\theoremstyle{remark}
\newtheorem{remark}{Remark}
\begin{document}

\title{Remarks on Semiclassical Wavefront Set}
\author{Kentaro Kameoka}

\date{}

\maketitle
\begin{abstract}
The essential support of the symbol of a semiclassical 
pseudodifferential operator is characterized by semiclassical wavefront sets of distributions.
The proof employs a coherent state whose center in phase space is dependent on Planck's constant.

\end{abstract}

\section{Introduction}
   In microlocal analysis, it is well known that the wavefront set $\mathrm{WF}(A)$ of a pseudodifferential operator
$A$ is characterized as follows;
\[
\mathrm{WF}(A)=\bigcup _u \mathrm{WF}(Au)
\]
where $u$ ranges over all distributions with compact support (H\"ormander~\cite[Corollary 28.1.26]{H}). 
Although an extension to the semiclassical setting is stated in Zworski~\cite[Theorem 8.1.6]{Z}, (which is our Corollary 1 with $\delta=0$), 
the proof seems to require a more precise argument.
The extension to the semiclassical setting claims that the semiclassical wavefront set $\mathrm{WF}_h(A)$ of a semiclassical pseudodifferential 
operator $A$ is characterized as follows;
\[
\mathrm{WF}_h(A)=\bigcup _u \mathrm{WF}_h(Au)
\]
where u ranges over all families $u=\{ u(h) \}_{0<h<h_0} \subseteq L^2 (\mathbb{R}^n)$ which are bounded with respect 
to $h$.
While the proof in \cite{Z} of the inclusion 
\[
\mathrm{WF}_h(A)\subseteq\bigcup _u \mathrm{WF}_h(Au)
\] 
simply indicates the use of the coherent state, 
it is possible that $(x_0, \xi_0)\not\in \mathrm{WF}_h(A\phi _{x_0,\xi_0 ,h})$ even if $(x_0, \xi_0)\in \mathrm{WF}_h(A)$,
where $\phi _{x_0,\xi_0 ,h}$ is the coherent state centered at $(x_0,\xi_0)$ (see Section 3).
This shows the difficulty of the problem and the proof requires a more detailed argument than that in \cite{Z}. 
In this note, we give a complete proof of this result using the coherent state whose center is dependent on the semiclassical parameter. 

For any $0\not \equiv \phi \in \mathscr{S}(\mathbb{R}^n)$ (the space of Schwartz functions), we define the corresponding coherent state
$\phi _{x_0,\xi_0 ,h}\in \mathscr{S}(\mathbb{R}^n)$ centered at $(x_0,\xi_0)\in T^*\mathbb{R}^n$ 
by
\[  \phi _{x_0,\xi_0 ,h}(x)=h^{-\frac{n}{4}} \phi \left(\frac{x-x_0}{h^{\frac{1}{2}}}\right)
  e^{i\langle x, \xi_0 \rangle /h}.\]
  
 We next recall the basic properties of the semiclassical wavefront set following Zworski~\cite[Chapter 8]{Z} 
(see also \cite{A}, \cite{CR}, \cite{G}, \cite{G1}, \cite{G2}, \cite{M}). 
The symbol class $S_{\delta}$ with $\delta \ge 0$ is defined as
\[ 
S_{\delta}=\{ a(x,\xi;h)\in C^{\infty}(T^*\mathbb{R}^n) | \mspace{6mu} |\partial _x ^\alpha \partial _\xi ^\beta
 a(x,\xi; h)|\le C_{\alpha ,\beta}h^{-\delta (|\alpha|+|\beta|)} \}.
\]
 We set $S=S_0$. A semiclassical pseudodifferential operator with symbol $a\in S_{\delta}$ is defined as
\[
Op_{t,h}(a)u(x)=(2\pi h)^{-n} \iint a(tx+(1-t)y,\xi;h)e^{i\langle x-y, \xi \rangle /h} u(y) dy d\xi 
\]
where $t\in [0,1]$ is the quantization parameter. We note that 
\begin{align*}
&Op_{1,h}(a)u(x)=\mathcal{F}_h^{-1}(a(x,\xi;h)\mathcal{F}_h u(\xi))(x), \\
&Op_{0,h}(a)u(x)=\mathcal{F}_h^{-1}(\mathcal{F}_h (a(y,\xi;h)u(y))(\xi))(x),
\end{align*}
where $\mathcal{F}_h$ is the semiclassical Fourier transform with the convention 
\[
\mathcal{F}_h u(\xi)=(2\pi h)^{-\frac{n}{2}}\int u(x)e^{-i\langle x,\xi \rangle /h} dx.
\]

For $a \in S_{\delta}$ with $0\le \delta < \frac{1}{2}$, we say that $(x_0, \xi _0)\not\in \mathrm{ess}\text{-}\mathrm{supp}(a) 
\subseteq T^*\mathbb{R}^n$ if and only if there exists an open neighborhood $U$ of $(x_0, \xi _0)$ such that
$a=\mathcal{O}_{C^{\infty}(U)}(h^{\infty})$.
The semiclassical wavefront set $\mathrm{WF}_h(A)$ of a semiclassical pseudodifferential operator $A=Op_{t,h}(a)$ is 
defined as $\mathrm{WF}_h(A)=\mathrm{ess}\text{-}\mathrm{supp}(a)$.
It is well known that if $Op_{t,h}(a)=Op_{s,h}(b)$, then $b(x,\xi;h) \sim \sum_{j=0}^{\infty}
\frac{1}{j!} \left( i (s-t)h \langle D_x,D_{\xi} \rangle \right)^j a(x,\xi;h)$. Thus   
 $\mathrm{ess}\text{-}\mathrm{supp}(a)=\mathrm{ess}\text{-}\mathrm{supp}(b)$ and
the definition of the semiclassical wavefront set $\mathrm{WF}_h(A)$ is independent of 
the quantization parameter $t\in [0,1]$ if $\delta \in [0,\frac{1}{2})$.

We next recall the definition of the semiclassical wavefront set $\mathrm{WF}_h (u) \subseteq T^*\mathbb{R}^n$ 
of a family of distributions $u=\{u(h)\}_{0<h<h_0}$. For simplicity, we assume that $\|u(h)\|_{L^2}\le C$ for
some constant $C$. 
 We say that $(x_0,\xi_0) \not\in \mathrm{WF}_h (u)$ if and only if there exists a symbol $a\in S$ with
$\inf_h |a(x_0,\xi_0;h)|>0$ such that $Op_{t,h}(a)u= \mathcal{O}_{L^2}(h^{\infty})$. 
Note that this definition is independent of the quantization parameter $t\in [0,1]$ since $Op_{t,h}(a)=Op_{s,h}(b)
$ with $a-b=\mathcal{O}_{S}(h)$.
It is known (see Zworski~\cite[Theorem 8.1.5]{Z}) that 
\[ \mathrm{WF}_h(Au) \subseteq \mathrm{WF}_h (A) \cap \mathrm{WF}_h (u)\]
if the symbol of $A$ is in $S_{\delta}, (0\le \delta < \frac{1}{2})$, which proves the inclusion $\supseteq$ in 
Corollary 1. 

Our main theorem is the following:
\begin{thm}
Suppose that $a\in S_{\delta}, (0\le \delta < \frac{1}{2})$, and $(x_0, \xi _0)\in \mathrm{ess}\text{-}\mathrm{supp}(a)$.
Then there exists $(x_h,\xi_h)\in T^*\mathbb{R}^n$ such that $(x_0, \xi _0)\in \mathrm{WF}_h(Op_{1,h}(a) 
\phi _{x_h,\xi_h ,h})$ for every $\phi \in \mathscr{S}(\mathbb{R}^n)$ whose Fourier transform is nonnegative and has compact support.
\end{thm}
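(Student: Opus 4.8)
The plan is to place the coherent state not at $(x_0,\xi_0)$ itself but at a point $(x_h,\xi_h)$, drifting with $h$ toward $(x_0,\xi_0)$, at which $|a(\cdot\,;h)|$ is essentially as large as it can be on a shrinking ball; away from the relevant sequence of values of $h$ one simply sets $(x_h,\xi_h)=(x_0,\xi_0)$, and the construction will not depend on $\phi$. First I would reduce $(x_0,\xi_0)\in\mathrm{ess\text{-}supp}(a)$ to a statement about $|a|$ only: for $a\in S_\delta$, the Landau--Kolmogorov interpolation inequalities applied on balls, together with the bounds $\|\partial^\gamma a(\cdot\,;h)\|_{L^\infty}\le C_\gamma h^{-\delta|\gamma|}$, show that $a=\mathcal O_{C^\infty(U)}(h^\infty)$ is equivalent to $\sup_{U'}|a(\cdot\,;h)|=\mathcal O(h^\infty)$ for every $U'\Subset U$; hence $(x_0,\xi_0)\in\mathrm{ess\text{-}supp}(a)$ precisely when $m_r(h):=\sup_{|(x,\xi)-(x_0,\xi_0)|\le r}|a(x,\xi;h)|$ fails to be $\mathcal O(h^\infty)$ for every $r>0$. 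From this I would extract a sequence $h_j\downarrow0$, radii $r_j\to0$, and points $(x_j,\xi_j)$ with $|a(x_j,\xi_j;h_j)|=m_{r_j}(h_j)$, chosen so that $|a(x_j,\xi_j;h_j)|\ge h_j^{N}$ for a fixed exponent $N$; setting $(x_h,\xi_h)=(x_j,\xi_j)$ for $h=h_j$ gives a family with $(x_h,\xi_h)\to(x_0,\xi_0)$ and $|a(x_h,\xi_h;h)|\ge h^{N}$ along $h=h_j$.

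Next I would compute $Op_{1,h}(a)\phi_{x_h,\xi_h,h}$. Here the hypotheses on $\phi$ enter. Since
\[
\mathcal F_h\phi_{x_h,\xi_h,h}(\xi)=(2\pi)^{-\frac n2}h^{-\frac n4}e^{i\langle x_h,\xi_h-\xi\rangle/h}\,\widehat\phi\!\left(\frac{\xi-\xi_h}{h^{1/2}}\right),
\]
compact support of $\widehat\phi$ confines this to $\{|\xi-\xi_h|\le Ch^{1/2}\}$, so $Op_{1,h}(a)\phi_{x_h,\xi_h,h}=\mathcal F_h^{-1}\big(a(\cdot\,;h)\,\mathcal F_h\phi_{x_h,\xi_h,h}\big)$ only sees $a$ on a thin $\xi$-slab; with the rapid decay of $\phi$ and $\delta<\tfrac12$ this makes the output, modulo $\mathcal O_{L^2}(h^\infty)$, microlocally concentrated at $(x_h,\xi_h)$. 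Taylor expansion of $a$ about $(x_h,\xi_h)$ yields $Op_{1,h}(a)\phi_{x_h,\xi_h,h}=a(x_h,\xi_h;h)\,\phi_{x_h,\xi_h,h}+r_h$, with $r_h$ a superposition of excited wave packets based at $(x_h,\xi_h)$. The point of $\widehat\phi\ge0$ is that, upon pairing with $\phi_{x_h,\xi_h,h}$ (equivalently, testing $Op_{1,h}(a)^{*}Op_{1,h}(a)$ against $\phi_{x_h,\xi_h,h}$), the leading term reproduces $a(x_h,\xi_h;h)$ integrated against the \emph{positive} mass $\int\widehat\phi$ rather than a possibly cancelling oscillatory average, and the quadratic contribution of $r_h$ acquires a sign that cannot destroy the main term; this should give
\[
\big\|Op_{1,h}(a)\phi_{x_h,\xi_h,h}\big\|_{L^2}\ \ge\ c\,|a(x_h,\xi_h;h)|\ \ge\ c\,h_j^{N}\qquad(h=h_j,\ j\gg1).
\]

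Finally, let $b\in S$ satisfy $\inf_h|b(x_0,\xi_0;h)|>0$. Then $b$ is elliptic on a fixed neighbourhood of $(x_0,\xi_0)$, hence on a neighbourhood of $(x_h,\xi_h)$ once $h=h_j$ is small; since $Op_{1,h}(a)\phi_{x_h,\xi_h,h}$ is microlocally supported near $(x_h,\xi_h)$ up to $\mathcal O_{L^2}(h^\infty)$, the semiclassical elliptic estimate for $Op_{t,h}(b)$ there gives $\|Op_{t,h_j}(b)Op_{1,h_j}(a)\phi_{x_{h_j},\xi_{h_j},h_j}\|_{L^2}\ge c'\,\|Op_{1,h_j}(a)\phi_{x_{h_j},\xi_{h_j},h_j}\|_{L^2}-\mathcal O(h_j^\infty)\ge c''h_j^{N}$, which is not $\mathcal O(h^\infty)$; as $b$ is arbitrary, $(x_0,\xi_0)\in\mathrm{WF}_h(Op_{1,h}(a)\phi_{x_h,\xi_h,h})$. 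The main obstacle is the quantitative claim in Step 1: a priori $m_r(h)$ only fails to be $\mathcal O(h^\infty)$ with an exponent that may depend on $r$, so one must arrange $r=r(h)\to0$ while keeping $m_{r(h)}(h)$ below a \emph{fixed} power of $h$ — this is exactly the obstruction that defeats the naive choice $(x_h,\xi_h)=(x_0,\xi_0)$, and resolving it requires exploiting the $S_\delta$ bounds, the wave-packet scale $h^{1/2}$, and the freedom to pass to a subsequence in $h$ simultaneously. The subsidiary difficulty is the remainder $r_h$ in Step 2: the crude Taylor bound $\|r_h\|_{L^2}=\mathcal O(h^{1/2-\delta})$ does not beat $|a(x_h,\xi_h;h)|\sim h^{N}$, so one genuinely needs the positivity and band-limitedness of $\widehat\phi$ instead of a brute-force estimate.
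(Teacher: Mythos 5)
Your overall plan (an $h$-dependent center drifting toward $(x_0,\xi_0)$, reduction of $\mathrm{ess}\text{-}\mathrm{supp}$ to sup-norm bounds via Landau--Kolmogorov, and a positivity argument using $\hat\phi\ge 0$) points in the right direction, but the two steps you defer as ``obstacles'' are the actual content of the theorem, and your Step 1 as formulated is false. You ask for radii $r_j\to 0$, $h_j\to 0$ and points $(x_j,\xi_j)$ within distance $r_j$ of $(x_0,\xi_0)$ with $|a(x_j,\xi_j;h_j)|\ge h_j^{N}$ for a \emph{fixed} $N$. This is impossible in general: if $a\sim\sum_j h^j a_j$ with $(x_0,\xi_0)\notin\mathrm{supp}\,a_j$ but $\mathrm{dist}((x_0,\xi_0),\mathrm{supp}\,a_j)\to 0$ (the paper's Example 2), then $\sup_{B(r)}|a|=\mathcal{O}(h^{j(r)})$ with $j(r)\to\infty$ as $r\to 0$, so no fixed power survives as $r_j\to 0$. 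The correct resolution is not a uniform exponent but a per-radius one: set $\alpha(r)=\sup\{\alpha:\ a=\mathcal{O}_{L^{\infty}(B(r))}(h^{\alpha})\}$ (finite for each $r$ by the gradient estimate), choose $r_j\downarrow 0$ at continuity points of the monotone function $\alpha$, and construct one family $(x_h,\xi_h)$ by interleaving so that for \emph{every} $j$ there is a sequence $h_k\to 0$ with $(x_{h_k},\xi_{h_k})\in B(r_j)$ and $|a(x_{h_k},\xi_{h_k};h_k)|\ge h_k^{\alpha_j+\varepsilon}$. This suffices because to contradict $(x_0,\xi_0)\notin\mathrm{WF}_h(Au)$ one only needs, for the neighborhood $V$ produced by the coherent-state test (the paper's Corollary 2), failure of $\mathcal{O}(h^{\infty})$ along some sequence of centers in $V$; the exponent $\alpha_j+\varepsilon$ is allowed to depend on $V$ through the choice of $j$ with $B(r_j)\subset V$.

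The second gap is that your lower bound $\|Op_{1,h}(a)\phi_{x_h,\xi_h,h}\|_{L^2}\ge c\,|a(x_h,\xi_h;h)|$ is asserted rather than proved, and, as you note, the crude Taylor remainder $\mathcal{O}(h^{1/2-\delta})$ cannot beat $h^{\alpha_j}$. What closes this is a second use of Landau--Kolmogorov, on a slightly larger ball $B(r_j+s)$ together with continuity of $\alpha$ at $r_j$: it upgrades the raw $S_\delta$ bound $|\partial a|\lesssim h^{-\delta}$ to $\|h^{\delta}\partial a\|_{L^{\infty}(B(r_j+s))}=\mathcal{O}(h^{\alpha_j-\varepsilon})$, i.e.\ where $|a|$ is small its gradient is correspondingly small. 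Hence the variation of $a$ over the wave-packet scale $h^{1/2}$ is $\mathcal{O}(h^{\frac12+\alpha_j-\varepsilon-\delta})$, which is negligible against the central value $h^{\alpha_j+\varepsilon}$ exactly when $\varepsilon<\frac12(\frac12-\delta)$. The paper then does not estimate the $L^2$ norm directly: it pairs $Op_{1,h}(a)\phi_{x_{h_k},\xi_{h_k},h_k}$ with a second packet $\psi_{x_{h_k},\xi_{h_k},h_k}$, $0\le\psi\in C_c^{\infty}$ with small support, obtaining the explicit integral of $a(x_{h_k}+h_k^{1/2}x,\xi_{h_k}+h_k^{1/2}\xi;h_k)\,\psi(x)\hat\phi(\xi)e^{i\langle x,\xi\rangle}$; the relative-variation bound keeps the integrand in a sector of half-angle $\pi/3$ about $\arg a(x_{h_k},\xi_{h_k};h_k)$ (compact support of $\hat\phi$ and smallness of $\mathrm{supp}\,\psi$ control the factor $e^{i\langle x,\xi\rangle}$), and an elementary sector inequality yields a lower bound $\gtrsim h_k^{\alpha_j+\varepsilon}$, contradicting the $\mathcal{O}(h^{\infty})$ decay. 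Without these two mechanisms your outline does not yet constitute a proof.
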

This leads to a characterization of the semiclassical wavefront set of a semiclassical pseudodifferential operator: 
\begin{corollary}
For a semiclassical pseudodifferential operator $A$ whose symbol is in $S_{\delta}, (0\le \delta < \frac{1}{2})$, 
\[
\mathrm{WF}_h(A)=\bigcup _u \mathrm{WF}_h(Au)
\]
where u ranges over all families $u=\{ u(h) \}_{0<h<h_0} \subseteq L^2 (\mathbb{R}^n)$ which are bounded with respect 
to $h$.
\end{corollary}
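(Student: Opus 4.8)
The corollary reduces immediately to the theorem: the inclusion $\supseteq$ is the one recalled in the introduction (for any $L^2$‑bounded family $u$ one has $\mathrm{WF}_h(Au)\subseteq\mathrm{WF}_h(A)$), and for $\subseteq$, given $(x_0,\xi_0)\in\mathrm{WF}_h(A)=\mathrm{ess}\text{-}\mathrm{supp}(a)$ one fixes one admissible $\phi$, takes the $(x_h,\xi_h)$ provided by the theorem, and sets $u=\{\phi_{x_h,\xi_h,h}\}_{0<h<h_0}$; this family is $L^2$‑bounded since $\|\phi_{x_h,\xi_h,h}\|_{L^2}=\|\phi\|_{L^2}$, and $(x_0,\xi_0)\in\mathrm{WF}_h(Au)$ by the theorem. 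So the content is the theorem, which I would prove in three steps.

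The first step is a localization reduction. For bounded $(x_h,\xi_h)$ (which is all the construction below needs, and the general case reduces to it), the family $v_h:=Op_{1,h}(a)\phi_{x_h,\xi_h,h}$ is, modulo $\mathcal{O}_{L^2}(h^\infty)$, microlocalized in the ball of radius $Ch^{1/2}$ about $(x_h,\xi_h)$: since $\hat\phi$ has compact support, $\mathcal{F}_h\phi_{x_h,\xi_h,h}$ lives in $\{|\xi-\xi_h|\le Ch^{1/2}\}$, so after replacing $a$ by $\chi(\xi)a$ with $\chi\in C_c^\infty$ equal to $1$ near the relevant frequencies, the $S_\delta$‑bounds give, via integration by parts in the oscillatory kernel of $Op_{1,h}(\chi a)$, rapid decay of $v_h$ away from $x_h$ and of $\mathcal{F}_h v_h$ away from $\xi_h$ on the scale $h^{1/2}+O(h^{1-\delta})\sim h^{1/2}$, with constants independent of $(x_h,\xi_h)$. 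An elliptic‑parametrix argument then shows that $(x_0,\xi_0)\in\mathrm{WF}_h(v_h)$ is \emph{equivalent} to the statement: for every neighbourhood $V$ of $(x_0,\xi_0)$, the family $\|v_h\|_{L^2}\cdot\mathbf{1}[(x_h,\xi_h)\in V]$ is not $\mathcal{O}(h^\infty)$. This is exactly the mechanism by which a center that depends on $h$ and need \emph{not} converge to $(x_0,\xi_0)$ can succeed: it is enough that $(x_h,\xi_h)$ re‑enters every neighbourhood of $(x_0,\xi_0)$ for arbitrarily small $h$ while $\|v_h\|$ stays bounded below by a fixed power of $h$ there.

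The second step is the key lemma: if $(x_0,\xi_0)\in\mathrm{ess}\text{-}\mathrm{supp}(a)$, then for every $\epsilon>0$ the quantity $\Phi_\epsilon(h):=\sup_{(y,\eta)\in\overline{B_\epsilon(x_0,\xi_0)}}\|Op_{1,h}(a)\phi_{y,\eta,h}\|_{L^2}$ is not $\mathcal{O}(h^\infty)$. Expanding $a(x,\xi;h)$ in $\xi$ about $\eta$ on the (compact, $h^{1/2}$‑scale) support of $\mathcal{F}_h\phi_{y,\eta,h}$ and integrating back gives $Op_{1,h}(a)\phi_{y,\eta,h}=a(\cdot,\eta;h)\,\widetilde\phi_{y,\eta,h}+\mathcal{O}_{L^2}(h^{1/2-\delta})$, where $\widetilde\phi$ is a fixed function built from $\phi$ (a reflection of $\phi$ up to a nonzero constant); pairing with $\widetilde\phi_{y,\eta,h}$ and using $\|\nabla_x a\|_\infty=\mathcal{O}(h^{-\delta})$ yields $|a(y,\eta;h)|\le C_\phi\big(\|Op_{1,h}(a)\phi_{y,\eta,h}\|+h^{1/2-\delta}\big)$. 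Hence $\Phi_\epsilon(h)=\mathcal{O}(h^\infty)$ would force $\sup_{\overline{B_\epsilon}}|a(\cdot;h)|=\mathcal{O}(h^{1/2-\delta})$; combining this with $\partial^\gamma a=\mathcal{O}(h^{-\delta|\gamma|})$, Landau–Kolmogorov interpolation on balls, and the hypothesis $\delta<\tfrac12$, one bootstraps $\sup_{\overline{B_{\epsilon_k}}}|a(\cdot;h)|=\mathcal{O}(h^{\alpha_k})$ with $\alpha_k\to\infty$ on a decreasing sequence of radii staying $\ge\epsilon/2$, so $\sup_{\overline{B_{\epsilon/2}}}|a(\cdot;h)|=\mathcal{O}(h^\infty)$ and one more interpolation gives $a=\mathcal{O}_{C^\infty(B_{\epsilon/4})}(h^\infty)$, contradicting $(x_0,\xi_0)\in\mathrm{ess}\text{-}\mathrm{supp}(a)$. (The same interpolation argument, without $\phi$, also gives the clean reformulation $(x_0,\xi_0)\in\mathrm{ess}\text{-}\mathrm{supp}(a)\iff \sup_{\overline{B_\epsilon}}|a(\cdot;h)|\neq\mathcal{O}(h^\infty)$ for all $\epsilon>0$, which is what actually gets used.)

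The third step builds $(x_h,\xi_h)$ by diagonalization: by the key lemma, for each $m$ there are $N_m$, a sequence $h^{(m)}_j\downarrow0$ and points $(y^{(m)}_j,\eta^{(m)}_j)\in\overline{B_{1/m}(x_0,\xi_0)}$ with $\|Op_{1,h^{(m)}_j}(a)\phi_{y^{(m)}_j,\eta^{(m)}_j,h^{(m)}_j}\|\ge (h^{(m)}_j)^{N_m}$; choosing the parameters $h^{(m)}_j$ along a single sparse decreasing sequence that enumerates all pairs $(m,j)$ and putting $(x_h,\xi_h)=(y^{(m)}_j,\eta^{(m)}_j)$ on a short interval around $h^{(m)}_j$ (and $(x_h,\xi_h)=(x_0,\xi_0)$ otherwise), the family $(x_h,\xi_h)$ lies in every $B_{1/m}(x_0,\xi_0)$ for arbitrarily small $h$ with $\|Op_{1,h}(a)\phi_{x_h,\xi_h,h}\|\ge h^{N_m}$ there, so by the reduction of step one $(x_0,\xi_0)\in\mathrm{WF}_h(Op_{1,h}(a)\phi_{x_h,\xi_h,h})$. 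The main obstacle I expect is making this single choice of $(x_h,\xi_h)$ work for \emph{every} admissible $\phi$ simultaneously: the lower bounds above a priori carry $\phi$‑dependent constants and $\phi$‑dependent locations, and it is here that the hypothesis "$\hat\phi$ nonnegative with compact support'' should be decisive — it gives $\phi(0)=\|\phi\|_{L^\infty}>0$ and a lower bound $|\phi(x)|\ge\tfrac12\phi(0)$ for $|x|\le 1/R_0$ with $R_0$, $\phi(0)$ controlled only by $\mathrm{supp}\,\hat\phi$ and $\int\hat\phi$, which lets one compare $\phi_{y,\eta,h}$ with a fixed reference coherent state and hence choose $(x_h,\xi_h)$ independently of $\phi$. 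I would expect this uniformity step, rather than the analysis of a single $\phi$, to be the technical heart of the proof.
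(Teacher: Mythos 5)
Your reduction of the corollary to the theorem is exactly the paper's: the inclusion $\supseteq$ is the recalled bound $\mathrm{WF}_h(Au)\subseteq \mathrm{WF}_h(A)\cap\mathrm{WF}_h(u)$, and for $\subseteq$ the paper likewise feeds the ($L^2$-normalized) coherent-state family of Theorem~1 into the union, so the content is indeed the theorem. There your route is viable but genuinely different. The paper never bounds $\|Op_{1,h}(a)\phi_{y,\eta,h}\|_{L^2}$ from below over a ball of centers; it chooses the centers from $a$ alone: with $\alpha(r)=\sup\{\alpha:\,a=\mathcal{O}_{L^\infty(B(r))}(h^\alpha)\}$ and radii $r_j\downarrow 0$ at which $\alpha$ is continuous, it picks (Lemma~2, a diagonal choice like your step three) points with $|a(x_{h_k},\xi_{h_k};h_k)|\ge h_k^{\alpha_j+\varepsilon}$, shows via Lemma~1/Proposition~2 that the variation of $a$ on the $h^{1/2}$ scale is $\mathcal{O}(h^{1/2-\delta+\alpha_j-\varepsilon})$, hence small relative to $|a|$ at the center since $\varepsilon<\tfrac12(\tfrac12-\delta)$, and then bounds the explicit pairing integral from \emph{below} by the sector inequality (Lemma~3); this is where $\hat\phi\ge 0$ with compact support and a small-support $\psi\ge 0$ are used. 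Your step two replaces this by the upper bound $|a(y,\eta;h)|\le C_\phi(\|Op_{1,h}(a)\phi_{y,\eta,h}\|_{L^2}+h^{1/2-\delta})$ plus a bootstrap; that works, provided you use the full iterated interpolation of the paper's Proposition~2 on shrinking balls (the one-step gradient estimate alone stalls the recursion at the exponent $1-2\delta$), and it buys a lower-bound mechanism with no positivity/sector argument and no continuity points of $\alpha(r)$, at the cost of slightly heavier localization in your step one (composing $Op(1-\chi)$ with $Op(a)$, which the paper's Corollary~2 avoids by working with the pairing). The price is exactly the issue you flag at the end: your centers are selected by the size of $\|Op_{1,h}(a)\phi_{y,\eta,h}\|$ and so depend on $\phi$, hence as written you do not recover the theorem's ``for every admissible $\phi$'' with one family $(x_h,\xi_h)$, whereas the paper gets that uniformity for free because its centers depend only on $a$. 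For the corollary under review this is harmless --- your own reduction fixes a single $\phi$, which is all the union over $u$ requires --- so there is no gap in what you need; but the remedy for the full theorem is not a comparison of coherent states, it is to choose the centers by the size of $a$ rather than by the size of $A\phi_{y,\eta,h}$, as the paper does.
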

\begin{remark}
The semiclassical wavefront set $\mathrm{WF}_h$ in this paper coincides with the finite part of the semiclassical wavefront set $\mathrm{WF}_h^f$ in 
\cite{A}, \cite{G}.
\end{remark}
\begin{remark}
Theorem 1 is not true in general if we set $(x_h,\xi_h)=(x_0 ,\xi_0)$ even if the symbol has an asymptotic expansion
of the form $a(x,\xi;h) \sim \sum_{j=0}^{\infty} h^j a_j (x,\xi)$ (see Section 3). In the proof of Theorem 1, we
choose $(x_h,\xi_h)$ carefully.
\end{remark}
In the Appendix, we give a lower bound on $\mathrm{WF}_h(b(x)e^{\frac{i}{h}\phi(x)})$ 
assuming no condition on $b$ or $\phi$ whose proof is close to that of Theorem 1. 

\section{Coherent State}
In this section, we recall the basic properties of the coherent state.
The semiclassical Fourier transform is given by 
\[\mathcal{F}_h \phi _{x_0,\xi_0 ,h}(\xi)
 =h^{-\frac{n}{4}}e^{i\langle x_0, \xi_0 \rangle /h} \hat{\phi} \Bigl(\frac{\xi-\xi_0}{h^{\frac{1}{2}}}\Bigr)
  e^{-i\langle x_0, \xi \rangle /h}.\]
\begin{prop}
Let $a\in S$, $\phi \in \mathscr{S}(\mathbb{R}^n)$ and $r_0>0$. Then 
\[Op_{t, h}(a)\phi _{x_0,\xi_0 ,h}=\mathcal{O}_{L^2}(h^{\infty})\]
uniformly for $(x_0, \xi_0)\in T^*\mathbb{R}^n$ with $\mathrm{dist}((x_0, \xi_0), \mathrm{supp}a)>r_0$.
\end{prop}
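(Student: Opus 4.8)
The mechanism is that $\phi_{x_0,\xi_0,h}$ lives, in phase space, within a ball of radius comparable to $h^{1/2}$ about $(x_0,\xi_0)$; if $\mathrm{supp}\,a$ is a fixed distance $r_0$ away, then $a$ meets only the phase-space tails of the coherent state and the output decays rapidly --- the content is to make this uniform in $(x_0,\xi_0)$. I would first make two reductions. The normalization is harmless, since $\|\phi_{x_0,\xi_0,h}\|_{L^2}=\|\phi\|_{L^2}$ for every $h$ and every $(x_0,\xi_0)$. And the quantization can be taken to be $t=1$: by the change-of-quantization formula from the introduction, $Op_{t,h}(a)=Op_{1,h}(b)$ with $b\sim\sum_{j\ge0}\tfrac{1}{j!}\bigl(i(1-t)h\langle D_x,D_\xi\rangle\bigr)^j a$, so $b\in S$ uniformly in $h$; moreover each partial sum of this expansion is a differential operator applied to $a$ and thus supported in $\mathrm{supp}\,a$, whence the remainder bounds give that $b=\mathcal{O}_{C^\infty}(h^\infty)$, uniformly, on any set at a fixed positive distance from $\mathrm{supp}\,a$. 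It therefore suffices to estimate $Op_{1,h}(b)\phi_{x_0,\xi_0,h}$ in $L^2$.

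Using $Op_{1,h}(b)\phi_{x_0,\xi_0,h}=\mathcal{F}_h^{-1}\bigl(b\,\mathcal{F}_h\phi_{x_0,\xi_0,h}\bigr)$, the formula for $\mathcal{F}_h\phi_{x_0,\xi_0,h}$ from Section 2, and the substitution $\xi=\xi_0+h^{1/2}\eta$, one reaches
\[
\bigl|Op_{1,h}(b)\phi_{x_0,\xi_0,h}(x)\bigr|=(2\pi)^{-n/2}h^{-n/4}\bigl|I(x)\bigr|,\qquad I(x)=\int b\bigl(x,\xi_0+h^{1/2}\eta;h\bigr)\hat\phi(\eta)\,e^{i\langle x-x_0,\eta\rangle/h^{1/2}}\,d\eta,
\]
so that $\|Op_{1,h}(b)\phi_{x_0,\xi_0,h}\|_{L^2}^2=(2\pi)^{-n}h^{-n/2}\int|I(x)|^2\,dx$. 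I would split this integral at $|x-x_0|=r_0/4$. On $|x-x_0|\le r_0/4$: if $|\eta|\le r_0/(4h^{1/2})$ then $(x,\xi_0+h^{1/2}\eta)$ is within distance $r_0/2$ of $(x_0,\xi_0)$, hence at distance at least $r_0/2$ from $\mathrm{supp}\,a$, so $b=\mathcal{O}_{C^\infty}(h^\infty)$ there uniformly; if $|\eta|>r_0/(4h^{1/2})$ then $b$ is bounded and $\hat\phi\in\mathscr{S}$, so $\int_{|\eta|>r_0/(4h^{1/2})}|\hat\phi|=\mathcal{O}(h^\infty)$. Thus $|I(x)|=\mathcal{O}(h^\infty)$ uniformly on this ball, and after the factor $h^{-n/4}$ and integration over the ball the contribution is $\mathcal{O}(h^\infty)$.

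The main point is the region $|x-x_0|>r_0/4$, treated by non-stationary phase in $\eta$. With $L=\frac{h^{1/2}}{i|x-x_0|^2}\langle x-x_0,\nabla_\eta\rangle$ one has $Le^{i\langle x-x_0,\eta\rangle/h^{1/2}}=e^{i\langle x-x_0,\eta\rangle/h^{1/2}}$, and since $L$ has constant coefficients in $\eta$, integrating by parts $k$ times (the boundary terms vanish because $\hat\phi$ and its derivatives decay) replaces the amplitude by $(-L)^k$ applied to $b(x,\xi_0+h^{1/2}\eta;h)\hat\phi(\eta)$. Each application brings a factor $h^{1/2}/|x-x_0|$ and a directional $\eta$-derivative; a derivative on $b(x,\xi_0+h^{1/2}\eta;h)$ produces $h^{1/2}\partial_\xi b$, bounded since $b\in S$, and a derivative on $\hat\phi$ keeps Schwartz decay, so $|I(x)|\le C_k\bigl(h^{1/2}/|x-x_0|\bigr)^k$ with $C_k$ depending only on $k$, $\phi$, and finitely many $S$-seminorms of $a$. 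Hence $h^{-n/4}|I(x)|\le C_k h^{k/2-n/4}|x-x_0|^{-k}$, whose square is integrable over $\{|x-x_0|>r_0/4\}$ once $2k>n$, with total $\mathcal{O}(h^{k-n/2})$. Letting $k\to\infty$ and assembling the two regions with the two reductions gives $Op_{t,h}(a)\phi_{x_0,\xi_0,h}=\mathcal{O}_{L^2}(h^\infty)$; since every constant above comes only from $r_0$, $\phi$, and finitely many $S$-seminorms of $a$, the bound is uniform over $\{(x_0,\xi_0):\mathrm{dist}((x_0,\xi_0),\mathrm{supp}\,a)>r_0\}$. The only thing to watch --- more bookkeeping than obstacle --- is that $k$ be chosen large enough simultaneously to make the $\eta$-integral converge, to defeat the normalization $h^{-n/4}$, and to make $|x-x_0|^{-k}$ square-integrable at infinity. (Alternatively, conjugating $Op_{t,h}(a)$ by translation in $x$, modulation in $\xi$, and the unitary dilation $v(x)\mapsto h^{-n/4}v(x/h^{1/2})$ reduces everything to a fixed pseudodifferential operator whose symbol is supported outside the ball of radius $h^{-1/2}r_0$, which makes the uniformity transparent.)
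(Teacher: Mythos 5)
Your argument is correct, but it takes a different route from the paper. The paper's proof is a soft, operator-level one: it splits the symbol as $a=b+c$ according to whether its support is far from $x_0$ in position or far from $\xi_0$ in frequency, inserts the cutoffs $\chi(x-x_0)$ and $\chi(hD-\xi_0)$, and concludes from two standard facts — disjoint-support composition in the calculus gives $Op_{t,h}(b)\chi(x-x_0)=\mathcal{O}_{L^2\to L^2}(h^{\infty})$ and $Op_{t,h}(c)\chi(hD-\xi_0)=\mathcal{O}_{L^2\to L^2}(h^{\infty})$, while $(1-\chi(x-x_0))\phi_{x_0,\xi_0,h}$ and $(1-\chi(hD-\xi_0))\phi_{x_0,\xi_0,h}$ are $\mathcal{O}_{L^2}(h^{\infty})$ by the Schwartz concentration of the coherent state — and it does this directly for every quantization $t$. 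You instead reduce to $t=1$ by the change-of-quantization expansion and then estimate the resulting oscillatory integral by hand: rescaling $\xi=\xi_0+h^{1/2}\eta$, splitting the $x$-integral at $|x-x_0|=r_0/4$, using the $\mathcal{O}(h^{\infty})$ smallness of the new symbol near $(x_0,\xi_0)$ together with the Schwartz tail of $\hat\phi$ on the inner region, and non-stationary phase in $\eta$ (gain $h^{1/2}/|x-x_0|$ per integration by parts, with $2k>n$ for square-integrability) on the outer region. Both arguments are sound; yours makes the uniformity in $(x_0,\xi_0)$ completely explicit, since every constant is traced to $r_0$, $\phi$, and finitely many $S$-seminorms of $a$, while the paper's is shorter and avoids your extra reduction step (note that your reduction to $t=1$ itself invokes the symbol calculus, so your proof is not calculus-free either, though its core estimate is elementary). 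The phase-space heuristic driving both proofs is the same: the coherent state lives in an $h^{1/2}$-ball around $(x_0,\xi_0)$, a fixed distance from $\mathrm{supp}\,a$.
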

\begin{proof}
Since $\mathrm{dist}((x_0, \xi_0), \mathrm{supp}a)>r_0$, we can take $\chi \in C_c^{\infty}(\mathbb{R}^n)$ which equals to $1$ near $0$ and 
$b, c\in S$ such that $a=b+c$ and $\mathrm{supp}\chi(x-x_0)\cap \mathrm{supp}b=\emptyset$, 
$\mathrm{supp}\chi(\xi-\xi_0)\cap \mathrm{supp}c=\emptyset$. 
The standard semiclassical calculus implies 
$Op_{t, h}(b)\chi(x-x_0)=\mathcal{O}_{L^2 \to L^2}(h^{\infty})$ and $Op_{t, h}(c)\chi(hD-\xi_0)=\mathcal{O}_{L^2 \to L^2}(h^{\infty})$. 
We also have $(1-\chi(x-x_0))\phi _{x_0,\xi_0 ,h}=\mathcal{O}_{L^2}(h^{\infty})$ and $(1-\chi(hD-\xi_0))\phi _{x_0,\xi_0 ,h}=\mathcal{O}_{L^2}(h^{\infty})$ 
since $\phi \in \mathscr{S}(\mathbb{R}^n)$ and $\chi=1$ near $0$. These complete the proof since 
\begin{align*}
Op_{t, h}(a)&\phi_{x_0,\xi_0 ,h}=Op_{t, h}(b)\chi(x-x_0)\phi _{x_0,\xi_0 ,h}+
Op_{t, h}(b)(1-\chi(x-x_0))\phi _{x_0,\xi_0 ,h}\\
&+Op_{t, h}(c)\chi(hD-\xi_0)\phi _{x_0,\xi_0 ,h}
+Op_{t, h}(c)(1-\chi(hD-\xi_0))\phi _{x_0,\xi_0 ,h}.
\end{align*}
\end{proof}
\begin{corollary}
Suppose that $(x_0,\xi_0) \not\in \mathrm{WF}_h (u)$, where $u=\mathcal{O}_{L^2}(1)$. 
Then there exists a neighborhood $V$ of $(x_0,\xi_0)$ such that 
\[(u, \psi_{x_h, \xi_h, h})_{L^2}=\mathcal{O}(h^{\infty})\]
for any $\psi \in \mathscr{S}$ and $(x_h, \xi_h)\in V$.
\end{corollary}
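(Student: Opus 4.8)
The plan is to combine the semiclassical elliptic parametrix with Proposition~1. Since $(x_0,\xi_0)\notin \mathrm{WF}_h(u)$ and this notion is independent of the quantization parameter, I would first fix the Weyl quantization $t=\tfrac12$ and choose $a\in S$ with $\inf_h|a(x_0,\xi_0;h)|>0$ and $Op_{1/2,h}(a)u=\mathcal{O}_{L^2}(h^\infty)$. Because symbols in $S=S_0$ have $(x,\xi)$-derivatives bounded uniformly in $h$, the function $a$ is elliptic on a whole ball $W_0$ about $(x_0,\xi_0)$, that is, $\inf_{W_0}\inf_h|a|>0$. Picking $\chi\in C_c^\infty(W_0)$ with $\chi\equiv 1$ on a smaller ball $W_1\ni(x_0,\xi_0)$, the standard semiclassical elliptic parametrix construction produces $e\in S$ with $Op_{1/2,h}(e)\,Op_{1/2,h}(a)=Op_{1/2,h}(\chi)+\mathcal{O}_{L^2\to L^2}(h^\infty)$, so that $B:=Op_{1/2,h}(\chi)$ satisfies $Bu=Op_{1/2,h}(e)\bigl(Op_{1/2,h}(a)u\bigr)+\mathcal{O}_{L^2}(h^\infty)=\mathcal{O}_{L^2}(h^\infty)$; moreover $B^*=B$ since $\chi$ is real and we are in the Weyl quantization.

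Next I would split, for arbitrary $(x_h,\xi_h)$ and $\psi\in\mathscr{S}$,
\[
(u,\psi_{x_h,\xi_h,h})_{L^2}=(Bu,\psi_{x_h,\xi_h,h})_{L^2}+(u,Op_{1/2,h}(1-\chi)\psi_{x_h,\xi_h,h})_{L^2},
\]
using $B=B^*$. The first term is $\mathcal{O}(h^\infty)$ by Cauchy--Schwarz together with $Bu=\mathcal{O}_{L^2}(h^\infty)$ and the identity $\|\psi_{x_h,\xi_h,h}\|_{L^2}=\|\psi\|_{L^2}$, which also makes the bound uniform in $(x_h,\xi_h)$. For the second term, note that $1-\chi\in S$ and, since $\chi\equiv 1$ on the open set $W_1$, $\mathrm{supp}(1-\chi)\subseteq T^*\mathbb{R}^n\setminus W_1$. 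Fixing $V$ to be a sufficiently small ball about $(x_0,\xi_0)$, so that $r_0:=\mathrm{dist}(V,T^*\mathbb{R}^n\setminus W_1)>0$, every $(x_h,\xi_h)\in V$ has distance at least $r_0$ from $\mathrm{supp}(1-\chi)$; hence Proposition~1, applied with symbol $1-\chi$ and Schwartz function $\psi$, gives $Op_{1/2,h}(1-\chi)\psi_{x_h,\xi_h,h}=\mathcal{O}_{L^2}(h^\infty)$ uniformly over $(x_h,\xi_h)\in V$. Since $u=\mathcal{O}_{L^2}(1)$, the second term is $\mathcal{O}(h^\infty)$ as well, and adding the two estimates finishes the proof; note that $V$ depends only on $a$ (hence on $u$) and not on $\psi$, as required.

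The step I expect to need the most care is the passage from ``$Op_{1/2,h}(a)u$ negligible with $a$ merely elliptic at $(x_0,\xi_0)$'' to ``$Op_{1/2,h}(\chi)u$ negligible with $\chi\equiv 1$ near $(x_0,\xi_0)$'', which is exactly the elliptic parametrix; this is also the reason for first reducing to the Weyl quantization, so that the cutoff operator $B$ is self-adjoint and can be moved off $\psi_{x_h,\xi_h,h}$ onto $u$ without losing powers of $h$. (Alternatively one may keep $t$ general and rerun the parametrix in the adjoint quantization $1-t$, obtaining $B^\ast u=Op_{1-t,h}(\chi)u=\mathcal{O}_{L^2}(h^\infty)$ directly.) Everything else --- the elementary computation of $\|\psi_{x_h,\xi_h,h}\|_{L^2}$, the ellipticity of $a$ on a ball, and the geometry used to choose $V$ --- is routine.
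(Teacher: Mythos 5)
Your proof is correct and follows essentially the same route as the paper: an elliptic parametrix producing a cutoff $\chi\equiv 1$ near $(x_0,\xi_0)$ with $Op_h(\chi)u=\mathcal{O}_{L^2}(h^\infty)$, the splitting of $(u,\psi_{x_h,\xi_h,h})_{L^2}$ via $\chi$ and $1-\chi$, and Proposition~1 applied to $1-\chi$ for $(x_h,\xi_h)$ in a small neighborhood $V$. The only cosmetic difference is that you fix the Weyl quantization to make the cutoff self-adjoint, whereas the paper keeps general $t$ and moves the cutoff across the pairing using the $t\leftrightarrow 1-t$ adjoint relation.
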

 \begin{proof}
By the definition of $\mathrm{WF}_h (u)$, there exists a symbol $a\in S$ with
$\inf_h |a(x_0,\xi_0;h)|>0$ such that $Op_{t,h}(a)u= \mathcal{O}_{L^2}(h^{\infty})$. 
The standard symbol calculus implies that there exists $b\in S$ such that 
\[Op_{t,h}(b)Op_{t,h}(a)=Op_{t,h}(\chi)+\mathcal{O}_{L^2 \to L^2}(h^{\infty}),\] 
where $0\le \chi \in C_c^{\infty}(T^*\mathbb{R}^n)$ is $h$-independent and equals to $1$ near $(x_0, \xi_0)$. 
We note that 
\[(u, \psi_{x_h, \xi_h, h})_{L^2}=(u, Op_{1-t, h}(1-\chi)\psi_{x_h, \xi_h, h})_{L^2}+(Op_{t,h}(\chi)u, \psi_{x_h, \xi_h, h})_{L^2}.\]
Proposition 1 implies 
$Op_{1-t, h}(1-\chi)\psi_{x_h, \xi_h, h}=\mathcal{O}_{L^2}(h^{\infty})$ if $(x_h, \xi_h)$ is sufficiently close to $(x_0, \xi_0)$. 
We also have $Op_{t,h}(\chi)u=\mathcal{O}_{L^2}(h^{\infty})$ since $Op_{t,h}(a)u= \mathcal{O}_{L^2}(h^{\infty})$. 
These complete the proof.
\end{proof}
\begin{remark}
Corollary 2 is often stated when $\psi$ is Gaussian (see \cite[Proposition 22]{CR}) and closely related to the FBI transform (see \cite[Theorem 13.14]{Z}). 
We use Corollary 2 for $\psi \in C_c^{\infty}(\mathbb{R}^n)$.
\end{remark}

\section{Counter Examples}
Using Corollary 2 it is easy to verify that $\mathrm{WF}_h (\phi _{x_0, \xi_0, h})=\{(x_0,\xi_0)\}$ for every $\phi \in \mathscr{S}(\mathbb{R}^n)$. 
However, $(x_0,\xi_0)\in \mathrm{WF}_h (A)$ does not imply $\mathrm{WF}_h (A \phi _{x_0, \xi_0, h})=\{(x_0,\xi_0)\}$, 
which suggests that the proof of Corollary 1 requires a precise argument. Here is an intuitive 
counter example;
\begin{example}
Take $h$-independent 
$a _0 \in S$ such that $(0,0)\in \mathrm{supp}a_0$ and $a_0 (x,\xi)=0 \mspace{6mu} \mathrm{if}\mspace{6mu}
 x_1<0$. If we set $a(x,\xi;h)=a_0 (x_1 -h^{\frac{1}{4}},x_2, \dots ,x_n,\xi)$, it is obvious that $a\in 
 S$ and $(0,0)\in \mathrm{ess}\text{-}\mathrm{supp}(a)$. 
 We have $\|\phi _{0,0,h}\|_{L^2(\{x_1 \ge \frac{1}{2}h^{\frac{1}{4}}\})}=\mathcal{O}(h^{\infty})$ since $\phi \in \mathscr{S}(\mathbb{R}^n)$.
 Thus $\|Op_{t,h}(a)\phi _{0,0,h}\|_{L^2}=\mathcal{O}(h^{\infty})$ and hence
 $(0,0)\not\in \mathrm{WF}_h(Op_{t,h}(a)\phi _{0,0,h})$.
\end{example}
One can also find a counter example $a$ which has an asymptotic expansion
of the form $a(x,\xi;h) \sim \sum_{j=0}^{\infty} h^j a_j (x,\xi)$;
\begin{example}
Take $h$-independent $a_j \in S$ such that $(0,0) \not\in \mathrm{supp}a_j$ and
\[\lim_{j\to \infty}\mathrm{dist}((0,0), \mathrm{supp}a_j)=0.\]
 By Borel's theorem, we can take $a\in S$ such that 
$a(x,\xi;h) \sim \sum_{j=0}^{\infty} h^j a_j (x,\xi)$. 
Then $(0,0)\in \mathrm{ess}\text{-}\mathrm{supp}(a)$. On the other hand, 
$\|Op_{t,h}(a)\phi _{0,0,h}\|_{L^2}=\mathcal{O}(h^{\infty})$ since $\|Op_{t,h}(a_j)\phi _{0,0,h}\|_{L^2}=\mathcal{O}(h^{\infty})$ for 
any $j$ by Proposition 1. Thus 
 $(0,0)\not\in \mathrm{WF}_h(Op_{t,h}(a)\phi _{0,0,h})$. 
\end{example}

\section{Proof of Theorem 1}
\begin{prop}
Suppose $a\in S_{\delta}, \mspace{6mu} \delta \ge 0$ and $\|a \|_{L^{\infty}}=\mathcal{O}(h^{\alpha}),
\mspace{6mu} \alpha \ge 0$. Then, 
$\|h^{\delta} \partial a\|_{L^{\infty}}=\mathcal{O}(h^{\alpha (1-\varepsilon)})$ for any $\varepsilon >0$.
\end{prop}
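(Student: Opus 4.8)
The plan is to read Proposition 2 as a form of the classical Landau--Kolmogorov interpolation inequality relating a bounded function to its higher derivatives, applied one coordinate direction at a time. First I would fix a coordinate direction $\partial_j$ on $T^*\mathbb{R}^n$ and freeze the remaining coordinates; restricting $a$ to the resulting line gives a function $g\in C^\infty(\mathbb{R})$ with $\|g\|_{L^\infty(\mathbb{R})}\le\|a\|_{L^\infty}=\mathcal{O}(h^\alpha)$ and, for every integer $p\ge 0$, $\|g^{(p)}\|_{L^\infty(\mathbb{R})}\le\|\partial_j^p a\|_{L^\infty}\le C_p h^{-\delta p}$ by the definition of $S_\delta$. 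Hence it suffices to prove, for an arbitrary $g\in C^\infty(\mathbb{R})$ all of whose derivatives are bounded and each integer $k\ge 2$, an interpolation bound
\[ \|g'\|_{L^\infty}\le K_k\,\|g\|_{L^\infty}^{1-1/k}\,\|g^{(k)}\|_{L^\infty}^{1/k}, \]
with $K_k$ independent of $g$ and of $h$.

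I would obtain this by bootstrapping the case $k=2$, which is elementary: subtracting the second-order Taylor expansions of $g(x+t)$ and $g(x-t)$ at $x$ gives $2t|g'(x)|\le 2\|g\|_{L^\infty}+t^2\|g''\|_{L^\infty}$ for every $t>0$, and optimizing in $t$ yields $\|g'\|_{L^\infty}^2\le 2\|g\|_{L^\infty}\|g''\|_{L^\infty}$. Applying this with $g$ replaced by $\partial_j^{p-1}a$ shows that $m_p:=\log\|\partial_j^p a\|_{L^\infty}$ obeys $2m_p\le\log 2+m_{p-1}+m_{p+1}$, i.e. $p\mapsto m_p+\tfrac12(\log 2)p^2$ is convex; comparing its values at $p=0,1,k$ then produces exactly the displayed inequality, with $K_k=2^{(k-1)/2}$. (Alternatively one may simply quote the Landau--Kolmogorov inequality and omit this step.)

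Finally, substituting $\|a\|_{L^\infty}=\mathcal{O}(h^\alpha)$ and $\|\partial_j^k a\|_{L^\infty}=\mathcal{O}(h^{-\delta k})$ into the interpolation bound gives $\|\partial_j a\|_{L^\infty}=\mathcal{O}\big(h^{\alpha(1-1/k)-\delta}\big)$, hence $\|h^\delta\partial_j a\|_{L^\infty}=\mathcal{O}\big(h^{\alpha(1-1/k)}\big)$. Given $\varepsilon>0$ one may assume $\varepsilon<1$ (otherwise the assertion is trivial, since $h^\delta\partial a$ is already $\mathcal{O}(1)$); choosing an integer $k\ge 1/\varepsilon$ and restricting to $0<h<1$, the exponent satisfies $\alpha(1-1/k)\ge\alpha(1-\varepsilon)\ge 0$, so $\mathcal{O}(h^{\alpha(1-1/k)})\subseteq\mathcal{O}(h^{\alpha(1-\varepsilon)})$; taking the maximum over the finitely many coordinate directions then yields $\|h^\delta\partial a\|_{L^\infty}=\mathcal{O}(h^{\alpha(1-\varepsilon)})$. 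There is no genuine obstruction here; the only mildly delicate point is packaging the iterated second-order estimate into the $k$-th order inequality, which is handled by the log-convexity remark above (or bypassed by invoking the classical inequality directly).
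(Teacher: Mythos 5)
Your argument is correct, and it organizes the iteration differently from the paper. The paper also starts from the second-order gradient estimate $\|\partial f\|_{L^\infty}\le C\|f\|_{L^\infty}^{1/2}\|\partial^2 f\|_{L^\infty}^{1/2}$, but instead of upgrading it to the $k$-th order Landau--Kolmogorov inequality it bootstraps the proposition itself: assuming the statement for some $\varepsilon_n$ (trivially true for $\varepsilon_0=1$), it applies it to $h^{\delta}\partial a\in S_{\delta}$ to control $h^{2\delta}\partial^2 a$, feeds this back into the gradient estimate, and obtains the statement for $\varepsilon_{n+1}$ with $1-\varepsilon_{n+1}=\tfrac12+\tfrac12(1-\varepsilon_n)^2$, so that $\varepsilon_n\to 0$; no fixed-order interpolation inequality ever appears. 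Your route instead freezes all but one coordinate, proves $\|g'\|_{L^\infty}\le 2^{(k-1)/2}\|g\|_{L^\infty}^{1-1/k}\|g^{(k)}\|_{L^\infty}^{1/k}$ by discrete log-convexity of $p\mapsto\log\|g^{(p)}\|_{L^\infty}+\tfrac12(\log 2)p^2$, and then picks $k\ge 1/\varepsilon$; this yields the explicit exponent $\alpha(1-1/k)$ with an explicit constant, which is slightly more quantitative, at the cost of establishing the higher-order inequality (the paper's Remark in fact notes that the $\delta=0$ case is exactly Kolmogorov's theorem, so your intermediate step is the classical result the paper chooses to circumvent). Two small points to tidy: the discrete convexity comparison at $p=0,1,k$ tacitly assumes the norms $\|g^{(p)}\|_{L^\infty}$ are nonzero (if $\|g^{(k)}\|_{L^\infty}=0$ a bounded $g$ is constant and the inequality is trivial, so this is easily dispatched), and you should note explicitly that taking the supremum of the one-dimensional bound over all frozen coordinates recovers $\|\partial_j a\|_{L^\infty}$; with those remarks the proof is complete.
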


If we apply Proposition 2 to the derivatives of $a$, we learn 
\begin{corollary}
Under the assumption of Proposition 2, $a=\mathcal{O}_{S_{\delta}}
(h^{\alpha (1-\varepsilon)})$ for any $\varepsilon >0$.
\end{corollary}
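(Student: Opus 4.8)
The plan is to deduce the full $S_\delta$-bound one seminorm at a time, by a finite induction on the order of differentiation in which the conclusion of Proposition 2 is fed back into its own hypothesis. Fix $\varepsilon>0$; I may assume $0<\varepsilon<1$ (the case $\varepsilon\ge 1$ being immediate from $a\in S_\delta$, since then $\alpha(1-\varepsilon)\le 0$). Recall that $a=\mathcal{O}_{S_\delta}(h^{\alpha(1-\varepsilon)})$ means exactly that for each multi-index $\gamma$ one has $\|\partial^{\gamma}a\|_{L^\infty}=\mathcal{O}(h^{\alpha(1-\varepsilon)-\delta|\gamma|})$, with an implied constant allowed to depend on $\gamma$; so it is enough to bound a single such seminorm, say with $|\gamma|=N$.

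The first step is the elementary stability observation that $b\in S_\delta$ forces $h^\delta\partial_j b\in S_\delta$ for every first-order partial derivative $\partial_j$, since $|\partial^{\mu}(h^\delta\partial_j b)|=h^\delta|\partial^{\mu}\partial_j b|\le C_\mu h^{\delta}h^{-\delta(|\mu|+1)}=C_\mu h^{-\delta|\mu|}$. Then I would write $\partial^{\gamma}=\partial_{j_N}\cdots\partial_{j_1}$ as a composition of $N$ first-order derivatives and choose $\varepsilon_1,\dots,\varepsilon_N>0$ with $(1-\varepsilon_1)\cdots(1-\varepsilon_N)\ge 1-\varepsilon$ — for instance $\varepsilon_k=\varepsilon/N$, which works by Bernoulli's inequality. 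Applying Proposition 2 to $a$ with parameter $\varepsilon_1$ gives $\|h^\delta\partial_{j_1}a\|_{L^\infty}=\mathcal{O}(h^{\alpha(1-\varepsilon_1)})$; since $h^\delta\partial_{j_1}a\in S_\delta$ by the stability observation and $\alpha(1-\varepsilon_1)\ge 0$, Proposition 2 applies again, now with parameter $\varepsilon_2$, yielding $\|h^{2\delta}\partial_{j_2}\partial_{j_1}a\|_{L^\infty}=\mathcal{O}(h^{\alpha(1-\varepsilon_1)(1-\varepsilon_2)})$. Iterating $N$ times along this factorization produces $\|h^{N\delta}\partial^{\gamma}a\|_{L^\infty}=\mathcal{O}(h^{\alpha(1-\varepsilon_1)\cdots(1-\varepsilon_N)})$, and hence, once $h$ is small, $\|\partial^{\gamma}a\|_{L^\infty}=\mathcal{O}(h^{\alpha(1-\varepsilon)-\delta N})$, because $\alpha\ge 0$ and the product of the $1-\varepsilon_k$ is at least $1-\varepsilon$. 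As $\gamma$ and $\varepsilon$ were arbitrary, this is the corollary.

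I do not expect a genuine obstacle, the argument being a finite induction; the only things to keep track of are the bookkeeping of the parameters $\varepsilon_k$ so that their product remains at least $1-\varepsilon$ after the required number of steps, and the verification at each stage that the intermediate symbol $h^{k\delta}\partial_{j_k}\cdots\partial_{j_1}a$ still lies in $S_\delta$, so that Proposition 2 can be reapplied — both of which are taken care of by the stability observation above.
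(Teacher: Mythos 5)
Your proposal is correct and is essentially the paper's own argument spelled out: the paper proves the corollary precisely by applying Proposition 2 iteratively to the (rescaled) derivatives $h^{k\delta}\partial^{k}a\in S_\delta$, and your bookkeeping with $\varepsilon_k=\varepsilon/N$ and Bernoulli's inequality just makes the multiplicative loss $(1-\varepsilon_1)\cdots(1-\varepsilon_N)\ge 1-\varepsilon$ explicit.
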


\begin{remark}
By considering $a(h^{\delta}x, h^{\delta}\xi)\in S$, Proposition 2 is reduced  to the case of $\delta=0$ , which is a consequence of 
Kolmogorov's general theorem from \cite[Section 6]{S}. 
We give a proof of Proposition 2 for the sake of completeness.
\end{remark}

\begin{proof}[Proof of Proposition 2]
Recall the well known gradient estimate (for example \cite[Lemma 4.31]{Z}), 
$\|\partial f\|_{L^{\infty}} \le C \| f\|_{L^{\infty}}^{\frac{1}{2}} \|\partial^2 f\|_{L^{\infty}}^{\frac{1}{2}}$
for any bounded $f\in C^{\infty}(\mathbb{R}^n)$.  
We assume that Proposition 2 is proved for $\varepsilon=\varepsilon_n$ $(\varepsilon_0 =1)$. If we apply Proposition 2 for 
$\varepsilon=\varepsilon_n$ to $h^{\delta}\partial a \in S_{\delta}$ with 
$\|h^{\delta}\partial a\|_{L^{\infty}}=\mathcal{O}(h^{\alpha (1-\varepsilon_n)})$,
we obtain
$\|h^{2\delta}\partial^2 a\|_{L^{\infty}}=\mathcal{O}(h^{\alpha (1-\varepsilon_n)^2})$.
We apply the gradient estimate to $a$ and obtain 
$\|h^{\delta}\partial a\|_{L^{\infty}}
=\mathcal{O}(h^{\frac{1}{2}\alpha+\frac{1}{2} \alpha(1-\varepsilon_n)^2})$.
Thus Proposition 2 is proved for 
$\varepsilon=\varepsilon_{n+1}$ with $1-\varepsilon_{n+1}=\frac{1}{2}+\frac{1}{2}(1-\varepsilon_n)^2$. 
It is then easy to see that $\lim_{n \to \infty}
\varepsilon_n=0$.
\end{proof}

\begin{proof}[Proof of Theorem 1]
We assume $a\in S_{\delta}, \mspace{6mu} \delta\in [0,\frac{1}{2})$ 
and $(0,0)\in \mathrm{ess}\text{-}\mathrm{supp}(a)$. 
This means by definition that $a\not =\mathcal{O}_{C^{\infty}(B(r))}(h^{\infty})$ for any $r>0$, 
if we denote $ B(r)=\{(x,\xi)\in T^*\mathbb{R}^n|\mspace{6mu} |x|^2+|\xi|^2<r^2\}$. 
In fact, this is equivalent to saying that
$a\not =\mathcal{O}_{L^{\infty}(B(r))}(h^{\infty})$ for any $r>0$. To see this, take $\chi \in C^{\infty}_c (B(r))$ 
such that $\chi= 1$ on $B(\frac{r}{2})$. If $a=\mathcal{O}_{L^{\infty}(B(r))}(h^{\infty})$, 
then $a\chi\in S_{\delta}$ and $\|a\chi\|_{L^{\infty}} =\mathcal{O}(h^{\infty})$. 
The gradient estimate applied to the derivatives of $a\chi$ implies $a\chi\ =\mathcal{O}_{C^{\infty}}(h^{\infty})$.
Thus $a=\mathcal{O}_{C^{\infty}(B(\frac{r}{2}))}(h^{\infty})$, which is a contradiction. 

Set $\alpha (r)=\sup \{\alpha| a=\mathcal{O}_{L^{\infty}(B(r))}(h^{\alpha})\}$. This is finite by the above remark. 
Since $\alpha (r)$ is monotone, we can take $r_1>r_2> \dots \to 0$ such that 
$\alpha (r)$ is continuous at $r=r_j$ for $j=1,2,\dots$. Set $\alpha_j=\alpha(r_j)$. 

In the following, we fix $\varepsilon>0$ so that $\varepsilon<\frac{1}{2}(\frac{1}{2}-\delta)$. 
We first give an upper bound on the first derivatives.
\begin{lemma}
For any $j=1,2,\dots$,
there exists $s>0$ such that 
\[\|h^{\delta} \partial a\|_{L^{\infty}(B(r_j+s))}=\mathcal{O}(h^{\alpha_j -\varepsilon}).\]
\end{lemma}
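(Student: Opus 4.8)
The plan is to exploit the continuity of $\alpha(r)$ at $r=r_j$ together with the interpolation estimate of Corollary 3 (which upgrades an $L^\infty$ decay rate to an $S_\delta$ decay rate up to an $\varepsilon$-loss). First I would fix a radius slightly larger than $r_j$: by continuity of $\alpha$ at $r_j$, choose $s>0$ small enough that $\alpha(r_j+2s) > \alpha_j - \frac{\varepsilon}{2}$ (say), so that in particular $a = \mathcal{O}_{L^\infty(B(r_j+2s))}(h^{\alpha_j - \varepsilon/2})$. The point of taking $r_j+2s$ rather than $r_j+s$ is to leave a collar on which a cutoff can be supported.

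Next I would localize. Pick $\chi \in C_c^\infty(B(r_j+2s))$ with $\chi \equiv 1$ on $B(r_j+s)$, and form $a\chi$. Since $\chi$ is $h$-independent and smooth with all derivatives bounded, $a\chi \in S_\delta$ and $\|a\chi\|_{L^\infty} = \mathcal{O}(h^{\alpha_j - \varepsilon/2})$ (the $L^\infty$ norm over all of $T^*\mathbb{R}^n$ now coincides with the norm over $B(r_j+2s)$). Now apply Corollary 3 to $a\chi$ with the exponent $\alpha = \alpha_j - \varepsilon/2$: for any $\varepsilon' > 0$ we get $a\chi = \mathcal{O}_{S_\delta}(h^{(\alpha_j - \varepsilon/2)(1-\varepsilon')})$. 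Choosing $\varepsilon'$ small enough that $(\alpha_j - \varepsilon/2)(1-\varepsilon') \ge \alpha_j - \varepsilon$ (possible since $\alpha_j$ is a fixed finite number and $\varepsilon/2 < \varepsilon$), we conclude in particular $\|h^\delta \partial(a\chi)\|_{L^\infty} = \mathcal{O}(h^{\alpha_j - \varepsilon})$. Finally, on $B(r_j+s)$ we have $\chi \equiv 1$, hence $\partial(a\chi) = \partial a$ there, which gives $\|h^\delta \partial a\|_{L^\infty(B(r_j+s))} = \mathcal{O}(h^{\alpha_j - \varepsilon})$ as claimed.

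The only subtlety — and the step to be careful about — is the bookkeeping of the two $\varepsilon$-type losses: one comes from the continuity of $\alpha$ at $r_j$ (moving from radius $r_j$ out to $r_j+2s$ costs an arbitrarily small amount in the exponent), and one comes from Corollary 3 (the multiplicative $(1-\varepsilon')$ loss). Both can be made smaller than any prescribed fraction of $\varepsilon$, so their combined effect still leaves us above $\alpha_j - \varepsilon$; there is no circularity because $\varepsilon$ was fixed once and for all (with $\varepsilon < \frac12(\frac12-\delta)$) before the lemma is invoked. I do not expect any genuine obstacle here: Corollary 3 does all the analytic work, and the argument is essentially the same cutoff-plus-gradient-estimate maneuver already used a few lines earlier to show that $L^\infty$-smallness of $a$ on balls is equivalent to $C^\infty$-smallness.
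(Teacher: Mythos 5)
Your argument is correct and is exactly the expansion the paper intends: its one-line proof cites Proposition 2, the continuity of $\alpha(r)$ at $r=r_j$, and the cutoff maneuver, which are precisely the three ingredients you assemble (using Corollary 3, i.e.\ Proposition 2 applied to $a\chi$). The only unstated triviality is the case $\alpha_j<\varepsilon/2$, where the exponent $\alpha_j-\varepsilon/2$ is negative and Proposition 2 (stated for $\alpha\ge 0$) does not literally apply, but there the claimed bound already follows from $a\in S_\delta$ since $h^{\delta}\partial a=\mathcal{O}(1)=\mathcal{O}(h^{\alpha_j-\varepsilon})$.
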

\begin{proof}
This follows from Proposition 2, the continuity of $\alpha(r)$ at $r=r_j$ and the cut off argument as above. 
\end{proof}

We next choose the center $(x_h,\xi_h)$ of our coherent state. 
\begin{lemma}
There exists $\{(x_h,\xi_h)\} \subseteq T^*\mathbb{R}^n$ such that
for every $j\ge 1$ there exists a sequence $h_1, h_2\dots \to 0$ which satisfies 
$\{(x_{h_k},\xi_{h_k})\}_{k=1}^{\infty}\subseteq B(r_j)$ and 
\[ |a(x_{h_k},\xi_{h_k};h_k)| \ge h_k^{\alpha_j +\varepsilon}, \mspace{10mu} k=1,2,\dots. \]
\end{lemma}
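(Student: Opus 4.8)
The plan is to construct the single family $\{(x_h,\xi_h)\}$ by a diagonal argument over the countably many indices $j$: each $j$ supplies its own sequence of admissible centres with $h\to 0$, and those sequences are interleaved into one assignment $h\mapsto(x_h,\xi_h)$.

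First I would unwind the definition of $\alpha_j=\alpha(r_j)$. Since $\varepsilon>0$, the exponent $\alpha_j+\varepsilon$ is strictly larger than the supremum defining $\alpha(r_j)$, so it does not belong to the set over which that supremum is taken; that is, $a\neq\mathcal{O}_{L^{\infty}(B(r_j))}(h^{\alpha_j+\varepsilon})$. Because $\|a(\cdot;h)\|_{L^{\infty}(B(r_j))}$ is a nonnegative quantity and the $\mathcal{O}$ refers to $h\to 0$, this failure means precisely $\limsup_{h\to 0}h^{-\alpha_j-\varepsilon}\|a(\cdot;h)\|_{L^{\infty}(B(r_j))}=\infty$. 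Hence there is a sequence $h^{(j)}_1>h^{(j)}_2>\cdots\to 0$ along which $\|a(\cdot;h^{(j)}_k)\|_{L^{\infty}(B(r_j))}\ge 2(h^{(j)}_k)^{\alpha_j+\varepsilon}$ for every $k$, and by the definition of the supremum of $|a(\cdot;h^{(j)}_k)|$ over the open set $B(r_j)$ I may pick a point $(x^{(j)}_k,\xi^{(j)}_k)\in B(r_j)$ with $|a(x^{(j)}_k,\xi^{(j)}_k;h^{(j)}_k)|\ge(h^{(j)}_k)^{\alpha_j+\varepsilon}$. This step uses only that $\alpha_j$ is a finite real number, which was already established; it needs no regularity of $a$ in $h$ and no continuity in $(x,\xi)$.

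Next I would interleave the sequences $H_j=\{h^{(j)}_k:k\ge 1\}$. Fix a surjection $\nu\colon\mathbb{N}\to\mathbb{N}$ that attains every value infinitely often, and recursively choose $g_1>g_2>\cdots$ with $g_m\in H_{\nu(m)}$ and $g_m<\min(g_{m-1},1/m)$, which is possible because each $H_j$ accumulates at $0$. The $g_m$ are pairwise distinct (being strictly decreasing), so there is no conflict in setting $(x_{g_m},\xi_{g_m})=(x^{(\nu(m))}_k,\xi^{(\nu(m))}_k)$, where $k$ is the index with $g_m=h^{(\nu(m))}_k$, and setting $(x_h,\xi_h)$ arbitrarily (say $(0,0)$) for all other $h\in(0,h_0)$. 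Now fix $j$: the set $\{m:\nu(m)=j\}$ is infinite, and as $m$ runs through it we have $g_m\to 0$, $(x_{g_m},\xi_{g_m})\in B(r_j)$, and $|a(x_{g_m},\xi_{g_m};g_m)|\ge g_m^{\alpha_j+\varepsilon}$. Relabelling these $g_m$ as $h_1>h_2>\cdots$ yields the sequence demanded by the lemma.

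The one place requiring care is that a \emph{single} family $\{(x_h,\xi_h)\}$ must serve \emph{all} $j$ at once, with the relevant $h$-sequence allowed to depend on $j$; this is exactly what the diagonalisation in the second step accomplishes, and it is what forces the bookkeeping with $\nu$ and $g_m$. The first step is by contrast pure definition-chasing once one recalls the reduction of $\mathrm{ess}\text{-}\mathrm{supp}$ to $L^{\infty}$-bounds established just before the definition of $\alpha(r)$: a failed $\mathcal{O}(h^{\beta})$ estimate on a nonnegative function, as $h\to 0$, is automatically witnessed along some sequence tending to $0$. So I do not anticipate a genuine analytic obstacle here; the content of the lemma is organisational.
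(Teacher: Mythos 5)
Your argument is correct and is essentially the paper's own proof: both extract, from the definition of $\alpha_j$ as a supremum, arbitrarily small $h$ and points of $B(r_j)$ where $|a|\ge h^{\alpha_j+\varepsilon}$, and then interleave these choices over all $j$ (the paper by blocks visiting $j=1,\dots,m$ with $h<\min\{h_{\mathrm{prev}},m^{-1}\}$, you via a surjection $\nu$ hitting each $j$ infinitely often with $g_m<1/m$), setting $(x_h,\xi_h)=(0,0)$ for the remaining $h$.
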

\begin{proof}
We can take $0<h_1<1$ and $(x_1,\xi_1)\in B(r_1)$ such that  
\[ |a(x_1,\xi_1;h_1)| \ge h_1^{\alpha_1 +\varepsilon}\]
by the definition of $\alpha(r)$.
We can take $0<h_2<h_1$ and $(x_2,\xi_2)\in B(r_2)$  such that 
\[ |a(x_2,\xi_2;h_2)| \ge h_2^{\alpha_2 +\varepsilon}.\]
by the same reason.
We next choose similarly $0<h_5<h_4<h_3<\min\{h_2,2^{-1}\}$ 
and $(x_3, \xi_3)\in B(r_1),\mspace{6mu}(x_4, \xi_4)\in B(r_2),
\mspace{6mu}(x_5, \xi_5)\in B(r_3)$ such that
\[|a(x_3,\xi_3;h_3)| \ge h_3^{\alpha_1 +\varepsilon}, \mspace{6mu}|a(x_4,\xi_4;h_4)| \ge h_4^{\alpha_2 +\varepsilon},
\mspace{6mu}|a(x_5,\xi_5;h_5)| \ge h_5^{\alpha_3 +\varepsilon}.\]
We next choose similarly $0<h_9<h_8<h_7<h_6<\min\{h_5,3^{-1}\}$ 
and $(x_6, \xi_6)\in B(r_1),\mspace{6mu}(x_7, \xi_7)\in B(r_2),
\mspace{6mu}(x_8, \xi_8)\in B(r_3), \mspace{6mu}(x_9, \xi_9)\in B(r_4)$ which satisfy similar estimates.
We repeat this process and obtain a sequence $h_1>h_2>\dots \to 0$. 
We set $(x_h,\xi_h)=(x_j,\xi_j)$ if $h=h_j$ and otherwise
$(x_h,\xi_h)=(0,0)$. Then Lemma 2 is easily verified.
\end{proof}

Take $0 \not\equiv\phi\in\mathscr{S}(\mathbb{R}^n)$ 
such that the Fourier transform $\hat{\phi}=\mathcal{F}_1 \phi$ is nonnegative
and of compact support. 
We now prove $(0,0)\in \mathrm{WF}_h(Op_{1,h}(a) \phi _{x_h,\xi_h ,h})$. 
Assume on the contrary that $(0,0)\not\in \mathrm{WF}_h(Op_{1,h}(a) \phi _{x_h,\xi_h ,h})$.
We take $j \ge 1$ such that $B(r_j)\subset V$ ($V$ is that in Corollary 2).
By Lemma 2, we can take a sequence $h_1, h_2\dots \to 0$ which satisfies 
$\{(x_{h_k},\xi_{h_k})\}_{k=1}^{\infty}\subseteq B(r_j)$ and 
$ |a(x_{h_k},\xi_{h_k};h_k)| \ge h_k^{\alpha_j +\varepsilon}$.
We take $0\le \psi\in C^{\infty}_c(\mathbb{R}^n)$ such that $\psi(0)=1$. 
Then Corollary 2 implies  
\begin{align*}
&(Op_{1,h}(a) \phi _{x_{h_k},\xi_{h_k} ,h}, \psi_{x_{h_k}, \xi_{h_k}, h})_{L^2}\\
&=(2\pi)^{-\frac{n}{2}}h_k^{-n}\iint \psi\Bigl(\frac{x-x_{h_k}}{h_k^{\frac{1}{2}}}\Bigr)a(x,\xi;h_k)
 e^{i\langle x-x_{h_k}, \xi-\xi_{h_k}  \rangle /h_k} 
 \hat{\phi} \Bigl(\frac{\xi-\xi_{h_k}}{h_k^{\frac{1}{2}}}\Bigr) d\xi dx \\
&=(2\pi )^{-\frac{n}{2}}\iint a(x_{h_k}+h_k^{\frac{1}{2}} x, \xi_{h_k}+h_k^{\frac{1}{2}} \xi;h_k)
\psi(x)\hat{\phi}(\xi)e^{i\langle x, \xi \rangle}dx d\xi\\
&=\mathcal{O}(h_k^{\infty}).
\end{align*}
To estimate this integral from below, we recall the following inequality.
\begin{lemma}
If a function $f$ takes its values in the sector 
$S_{\theta_0, \theta}=\{z \in \mathbb{C}||\arg z-\theta_0|\le \theta\}$ with $0\le \theta<\frac{\pi}{2}$, then 
 \[\left|\int f dx \right| \ge \cos \theta \int |f|dx.\]
 \end{lemma}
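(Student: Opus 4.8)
The plan is to reduce immediately to the case $\theta_0=0$ by replacing $f$ with $e^{-i\theta_0}f$. Multiplication by the unimodular constant $e^{-i\theta_0}$ leaves both $\left|\int f\,dx\right|$ and $\int|f|\,dx$ unchanged, while it rotates the sector $S_{\theta_0,\theta}$ onto $S_{0,\theta}$. So it suffices to prove the inequality under the hypothesis that $f(x)$ lies in the sector $\{z\in\mathbb{C}\mid |\arg z|\le\theta\}$ about the positive real axis (together with $z=0$) for every $x$.

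Next I would pass to the real part. For each fixed $x$, writing $f(x)=|f(x)|e^{i\varphi(x)}$ with $|\varphi(x)|\le\theta<\tfrac{\pi}{2}$ (the value $f(x)=0$ being harmless), one has $\operatorname{Re}f(x)=|f(x)|\cos\varphi(x)\ge|f(x)|\cos\theta$, since $\cos$ is decreasing on $[0,\tfrac{\pi}{2}]$ and $|\varphi(x)|\le\theta$. Integrating this pointwise inequality gives $\int\operatorname{Re}f\,dx\ge\cos\theta\int|f|\,dx$.

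Finally I would combine this with the trivial bound $\left|\int f\,dx\right|\ge\operatorname{Re}\int f\,dx=\int\operatorname{Re}f\,dx$ to conclude $\left|\int f\,dx\right|\ge\cos\theta\int|f|\,dx$. There is essentially no obstacle here; the only points to keep in mind are that $f$ should be assumed measurable and integrable (so that all integrals make sense, and $\int|f|\,dx<\infty$, which is the case in the application where $f$ is a Schwartz-type integrand), and that the exceptional set $\{f=0\}$ causes no trouble because the estimate $\operatorname{Re}f(x)\ge|f(x)|\cos\theta$ holds there trivially.
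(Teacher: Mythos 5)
Your proof is correct and is essentially identical to the paper's: both reduce to $\theta_0=0$ by multiplying by $e^{-i\theta_0}$, use the pointwise bound $\operatorname{Re}f\ge\cos\theta\,|f|$, and conclude via $\left|\int f\,dx\right|\ge\operatorname{Re}\int f\,dx=\int\operatorname{Re}f\,dx$. Your remarks on integrability and the zero set are just added bookkeeping, not a different argument.
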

 \begin{proof}
 By considering $e^{-i\theta_0}f$, we may assume $\theta_0=0$. Then we have $\mathrm{Re}f \ge \cos \theta |f|$ and thus 
\[ \left|\int f dx \right|\ge \mathrm{Re}\int f dx =\int \mathrm{Re}f dx\ge  \cos \theta\int  |f|dx.\]
\end{proof}
Lemma 1 implies that for $(x, \xi)\in \mathrm{supp}(\psi(x)\hat{\phi}(\xi))$
\begin{align*}
&|a(x_{h_k}+h_k^{\frac{1}{2}} x, \xi_{h_k}+h_k^{\frac{1}{2}} \xi;h_k)-a(x_{h_k}, \xi_{h_k};h_k)|\\
&=|h_k^{\frac{1}{2}}\int_0^1 ((x\cdot \partial_x+\xi \cdot \partial_{\xi})a)(x_{h_k}+th_k^{\frac{1}{2}} x, \xi_{h_k}+th_k^{\frac{1}{2}} \xi;h_k)dt|\\
&\lesssim h_k^{\frac{1}{2}}h_k^{\alpha_j -\varepsilon -\delta}
\end{align*}
since $\mathrm{supp}(\psi(x)\hat{\phi}(\xi))$ is compact and thus $(x_{h_k}+th_k^{\frac{1}{2}} x, \xi_{h_k}+th_k^{\frac{1}{2}} \xi)
\in B(r_j +s)$ for large $k$ ($s$ is that in Lemma 1). 
We recall that $|a(x_{h_k},\xi_{h_k};h_k)| \ge h_k^{\alpha_j +\varepsilon}$. 
We also recall that $\varepsilon<\frac{1}{2}(\frac{1}{2}-\delta)$ and thus 
we have $\frac{1}{2}+\alpha_j-\varepsilon-\delta>\alpha_j+\varepsilon$. 
These imply that 
\[\left|\frac{a(x_{h_k}+h_k^{\frac{1}{2}} x, \xi_{h_k}+h_k^{\frac{1}{2}} \xi;h_k)-a(x_{h_k}, \xi_{h_k};h_k)}{a(x_{h_k}, \xi_{h_k};h_k)}\right|\le \frac{1}{2}\]
for large $k$.
Thus we have
\[|a(x_{h_k}+h_k^{\frac{1}{2}} x, \xi_{h_k}+h_k^{\frac{1}{2}} \xi;h_k)|\ge \frac{1}{2} h_k^{\alpha_j +\varepsilon}\]
and
\[ \left|\arg \frac{a(x_{h_k}+h_k^{\frac{1}{2}} x, \xi_{h_k}+h_k^{\frac{1}{2}} \xi;h_k)}
{a(x_{h_k},\xi_{h_k};h_k)}\right| \le \frac{\pi}{6}\]
on the support of $\psi(x)\hat{\phi}(\xi)$. Moreover, if we take $\mathrm{supp}\psi $ sufficiently small, 
$|\arg e^{i\langle x, \xi \rangle}|\le \frac{\pi}{6}$
 on the support of $\psi(x)\hat{\phi}(\xi)$ since $\hat{\phi}$ has compact support. Thus 
 the integrand $a(x_{h_k}+h_k^{\frac{1}{2}} x, \xi_{h_k}+h_k^{\frac{1}{2}} \xi;h_k)
 \psi(x)\hat{\phi}(\xi)e^{i\langle x, \xi \rangle}$ takes its values in the sector $\{z \in \mathbb{C}||\arg z-\arg a(x_{h_k},\xi_{h_k};h_k)|\le \frac{\pi}{3}\}$ and Lemma 3 implies
\begin{align*}
 &|\iint a(x_{h_k}+h_k^{\frac{1}{2}} x, \xi_{h_k}+h_k^{\frac{1}{2}} \xi;h_k)
 \psi(x)\hat{\phi}(\xi)e^{i\langle x, \xi \rangle}dx d\xi|\\
&\ge \frac{1}{2} \iint |a(x_{h_k}+h_k^{\frac{1}{2}} x, \xi_{h_k}+h_k^{\frac{1}{2}} \xi;h_k)
\psi(x)\hat{\phi}(\xi)e^{i\langle x, \xi \rangle}|dx d\xi\\
&\ge \frac{1}{4}\iint h_k^{\alpha_j +\varepsilon}\psi(x)\hat{\phi}(\xi)
dx d\xi\\
&\gtrsim  h_k^{\alpha_j +\varepsilon}.
\end{align*}
which is a contradiction.
\end{proof}

\appendix 
\section*{Appendix}
The following lower bound on $\mathrm{WF}_h(b(x)e^{\frac{i}{h}\phi(x)})$ with no condition on $b$ or $\phi$ may be new and its proof is 
similar to that of Theorem 1.
\begin{prop}
Let $b, \phi \in C^{\infty}_c(\mathbb{R}^n)$ be $h$-independent and $\mathrm{Im} \phi \ge 0$.
Then,

(1). $\mathrm{WF}_h(b(x)e^{\frac{i}{h}\phi(x)}) \subseteq \{(x,\partial \phi(x))| x\in \mathrm{supp}b, \mathrm{Im} \phi(x)=0\}$.

(2). $\overline{\{(x,\partial \phi(x))| b(x)\not=0, \mathrm{Im} \phi(x)=0\}} \subseteq \mathrm{WF}_h(b(x)e^{\frac{i}{h}\phi(x)})$.
\end{prop}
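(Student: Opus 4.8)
The plan is to prove the two inclusions separately. Note first that $u=\{b(x)e^{i\phi(x)/h}\}$ is bounded in $L^{2}$, since $|e^{i\phi(x)/h}|=e^{-\mathrm{Im}\,\phi(x)/h}\le 1$, so that $\mathrm{WF}_{h}(u)$ is defined. For (1), suppose $(x_{0},\xi_{0})$ does not belong to the right-hand side; I would split into three cases. If $x_{0}\notin\mathrm{supp}\,b$, take $\chi\in C_{c}^{\infty}$ equal to $1$ near $x_{0}$ and supported away from $\mathrm{supp}\,b$; then $Op_{t,h}(\chi)u=\chi u=0$, and the $h$-independent symbol $\chi(x)\in S$ is nonzero at $(x_{0},\xi_{0})$, so $(x_{0},\xi_{0})\notin\mathrm{WF}_{h}(u)$. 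If $x_{0}\in\mathrm{supp}\,b$ but $\mathrm{Im}\,\phi(x_{0})>0$, take $\chi$ equal to $1$ near $x_{0}$ and supported in $\{\mathrm{Im}\,\phi\ge c\}$ for some $c>0$; then $\|Op_{t,h}(\chi)u\|_{L^{2}}\le C e^{-c/h}=\mathcal{O}(h^{\infty})$, so again $(x_{0},\xi_{0})\notin\mathrm{WF}_{h}(u)$.

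The remaining case of (1) is $x_{0}\in\mathrm{supp}\,b$, $\mathrm{Im}\,\phi(x_{0})=0$ and $\xi_{0}\ne\partial\phi(x_{0})$. Here the key observation is that $\mathrm{Im}\,\phi\ge 0$ attains its minimum $0$ at $x_{0}$, so $\partial(\mathrm{Im}\,\phi)(x_{0})=0$; hence $\partial\phi(x_{0})$ is a real vector and $\partial\phi(y)-\xi\ne 0$ for $y$ in a small ball $B\ni x_{0}$ and $\xi$ in a small ball around $\xi_{0}$. Choose $\chi_{1}\in C_{c}^{\infty}(B)$ equal to $1$ near $x_{0}$ and $\chi_{2}\in C_{c}^{\infty}$ equal to $1$ near $\xi_{0}$ and supported in that $\xi$-ball. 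Then $Op_{0,h}(\chi_{1}(y)\chi_{2}(\xi))u=\mathcal{F}_{h}^{-1}\bigl(\chi_{2}\,\mathcal{F}_{h}(\chi_{1}u)\bigr)$, and repeated integration by parts in
\[\mathcal{F}_{h}(\chi_{1}u)(\xi)=(2\pi h)^{-n/2}\int \chi_{1}(y)b(y)\,e^{i(\phi(y)-\langle y,\xi\rangle)/h}\,dy\]
with the transpose of $Lf=\tfrac{h}{i}|\partial\phi(y)-\xi|^{-2}\,\overline{(\partial\phi(y)-\xi)}\cdot\partial_{y}f$ (note $Le^{i(\phi(y)-\langle y,\xi\rangle)/h}=e^{i(\phi(y)-\langle y,\xi\rangle)/h}$ and $|e^{i\phi/h}|\le 1$) shows $\mathcal{F}_{h}(\chi_{1}u)(\xi)=\mathcal{O}(h^{\infty})$ uniformly for $\xi\in\mathrm{supp}\,\chi_{2}$. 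Hence $\|Op_{0,h}(\chi_{1}(y)\chi_{2}(\xi))u\|_{L^{2}}=\|\chi_{2}\,\mathcal{F}_{h}(\chi_{1}u)\|_{L^{2}}=\mathcal{O}(h^{\infty})$ with an $h$-independent symbol nonzero at $(x_{0},\xi_{0})$, which finishes (1).

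For (2), fix $x_{*}$ with $b(x_{*})\ne 0$ and $\mathrm{Im}\,\phi(x_{*})=0$; translating, assume $x_{*}=0$ and put $\xi_{*}=\partial\phi(0)$, which is real by the observation above. Assume for contradiction $(0,\xi_{*})\notin\mathrm{WF}_{h}(u)$; then Corollary 2 gives $(u,\psi_{0,\xi_{*},h})_{L^{2}}=\mathcal{O}(h^{\infty})$ for every $\psi\in C_{c}^{\infty}$. With $\Phi=\phi-\langle\cdot,\xi_{*}\rangle$ (so $\Phi(0)\in\mathbb{R}$, $\partial\Phi(0)=0$, $\mathrm{Im}\,\Phi\ge 0$), the substitution $x=h^{1/2}y$ gives
\[(u,\psi_{0,\xi_{*},h})_{L^{2}}=h^{n/4}e^{i\Phi(0)/h}\int b(h^{1/2}y)\,\overline{\psi(y)}\;e^{i(\Phi(h^{1/2}y)-\Phi(0))/h}\,dy,\]
where $(\Phi(h^{1/2}y)-\Phi(0))/h=\tfrac12\langle\Phi''(0)y,y\rangle+O(h^{1/2}|y|^{3})$ has nonnegative imaginary part. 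Taking $0\le\psi\in C_{c}^{\infty}$, $\psi\not\equiv 0$, with support in a sufficiently small ball, one checks that for $h$ small the integrand $b(h^{1/2}y)\psi(y)e^{i(\Phi(h^{1/2}y)-\Phi(0))/h}$ lies in the sector $\{|\arg w-\arg b(0)|\le\pi/3\}$ and has modulus $\ge\tfrac12|b(0)|e^{-\pi/3}\psi(y)$ — because $b(h^{1/2}y)$ is close to $b(0)$ and the phase has small modulus on $\mathrm{supp}\,\psi$ — so Lemma 3 yields $|(u,\psi_{0,\xi_{*},h})_{L^{2}}|\ge c\,h^{n/4}$ for small $h$ with $c>0$, contradicting $\mathcal{O}(h^{\infty})$. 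Thus $(0,\xi_{*})\in\mathrm{WF}_{h}(u)$, and since $\mathrm{WF}_{h}(u)$ is closed (immediate from the definition, as symbols in $S$ have uniformly bounded first derivatives) we may pass to the closure, which proves (2).

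The main obstacle is the same as in Theorem 1, appearing twice: in the last case of (1), running a non-stationary phase argument with a genuinely complex phase, which succeeds only because $\mathrm{Im}\,\phi\ge 0$ and $x_{0}$ is a minimum of $\mathrm{Im}\,\phi$ (so $\partial\phi(x_{0})$ is real and $\xi_{0}=\partial\phi(x_{0})$ is the right obstruction); and in (2), showing the relevant oscillatory integral is bounded below with no information on $\Phi''(0)$, which may be degenerate or have large imaginary part. The latter is handled, exactly as in Theorem 1, by shrinking $\mathrm{supp}\,\psi$ so the phase stays in a narrow sector and invoking Lemma 3. Everything else is routine semiclassical symbol calculus together with Proposition 1 and Corollary 2.
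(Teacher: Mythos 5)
Your proof is correct and follows essentially the same route as the paper: cutoffs plus a non-stationary phase/integration-by-parts argument for (1), and for (2) pairing with a coherent state centered at $(x_0,\partial\phi(x_0))$, rescaling by $h^{1/2}$ so the quadratic vanishing of $\mathrm{Re}\,\Phi$ and $\mathrm{Im}\,\Phi$ keeps the phase in a narrow sector, and concluding with Lemma 3. The only cosmetic difference is that you obtain the final lower bound by a pointwise estimate on the integrand (small imaginary part of the phase) where the paper invokes Fatou's lemma, and you keep the center $(0,\xi_*)$ explicit instead of normalizing $\partial\phi(x_0)=0$.
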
 
\begin{remark}
We note that $\partial \phi(x)$ is real if $\mathrm{Im} \phi(x)=0$, since $\mathrm{Im} \phi \ge 0$ everywhere and thus $\partial \mathrm{Im}\phi(x)=0$.
\end{remark}
\begin{corollary}
If $\phi$ is real valued, 
\[\mathrm{WF}_h(b(x)e^{\frac{i}{h}\phi(x)}) = \{(x,\partial \phi(x))| x\in \mathrm{supp}b \}.\]
\end{corollary}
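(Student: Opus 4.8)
The plan is simply to combine the two inclusions of Proposition 3, using that when $\phi$ is real valued the side condition $\mathrm{Im}\,\phi(x)=0$ holds at every point and so may be dropped. First I would record what Proposition 3 gives in this case. Since $\mathrm{Im}\,\phi\equiv 0$, part (1) yields
\[\mathrm{WF}_h(b(x)e^{\frac{i}{h}\phi(x)}) \subseteq \{(x,\partial \phi(x))\mid x\in \mathrm{supp}\,b\},\]
and part (2) yields
\[\overline{\{(x,\partial \phi(x))\mid b(x)\neq 0\}} \subseteq \mathrm{WF}_h(b(x)e^{\frac{i}{h}\phi(x)}).\]
Writing $A=\{(x,\partial\phi(x))\mid b(x)\neq 0\}$ and $B=\{(x,\partial\phi(x))\mid x\in\mathrm{supp}\,b\}$, it therefore suffices to prove $\overline{A}=B$, for then the two displayed inclusions pinch to equality.

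Next I would check $\overline{A}=B$. The section $\iota\colon x\mapsto(x,\partial\phi(x))$ is continuous, so it carries the closure of a set into the closure of the image; since $\mathrm{supp}\,b=\overline{\{b\neq0\}}$ by definition of the support, this gives $B=\iota(\overline{\{b\neq0\}})\subseteq\overline{\iota(\{b\neq0\})}=\overline{A}$. For the reverse inclusion, note $A\subseteq B$, and $B$ is closed because $b\in C_c^\infty(\mathbb{R}^n)$ makes $\mathrm{supp}\,b$ compact, so $B=\iota(\mathrm{supp}\,b)$ is the continuous image of a compact set; hence $\overline{A}\subseteq B$. Combining the two inclusions gives $\overline{A}=B$, and the corollary follows.

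There is essentially no obstacle here: the only point needing a moment's attention is the passage from $\{b\neq0\}$ to $\mathrm{supp}\,b$, which is dispatched by the continuity of $x\mapsto(x,\partial\phi(x))$ together with the compactness of $\mathrm{supp}\,b$. One could also invoke the fact that $\mathrm{WF}_h$ is closed (the defining condition $\inf_h|a(x_0,\xi_0;h)|>0$ for $a\in S$ persists on a neighborhood, since symbols in $S$ are equicontinuous in $h$), but this is not needed given that the closure is already present in Proposition 3(2).
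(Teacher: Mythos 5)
Your proof is correct and follows the same route the paper intends: the corollary is stated as an immediate consequence of Proposition 3, obtained by dropping the condition $\mathrm{Im}\,\phi(x)=0$ when $\phi$ is real and identifying $\overline{\{(x,\partial\phi(x))\mid b(x)\neq 0\}}$ with $\{(x,\partial\phi(x))\mid x\in\mathrm{supp}\,b\}$ via continuity of $x\mapsto(x,\partial\phi(x))$. Your explicit verification of that closure identity (using compactness of $\mathrm{supp}\,b$) just spells out what the paper leaves implicit.
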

\begin{remark}
Both inclusions in Proposition 3 may be strict for a complex valued $\phi$. 
For (1), we take $\phi(x)=ix^2$ and $b(x)=e^{-\frac{1}{x^2}}$ near $0$. Then $(0, 0)$ belongs to the right hand side of (1). 
On the other hand, $b(x)e^{\frac{i}{h}\phi(x)}=\mathcal{O}_{L^2}(h^{\infty})$ near $0$ and thus 
$(0, 0)\not\in \mathrm{WF}_h(b(x)e^{\frac{i}{h}\phi(x)})$ since $e^{-\frac{1}{x^2}}e^{-\frac{x^2}{h}}\le e^{-2\sqrt{\frac{1}{x^2} \cdot \frac{x^2}{h}}}
=e^{-2\sqrt{\frac{1}{h}}}$. 

For (2), we take $\phi(x)=ix^2$ and $b(x)=x^2$ near $0$. Then $(0, 0)$ does not belong to the left hand side of (2). 
On the other hand, corollary 2 implies $(0, 0)\in \mathrm{WF}_h(b(x)e^{\frac{i}{h}\phi(x)})$ 
since $(x^2 e^{-\frac{x^2}{h}}, \psi_{0, 0, h})_{L^2}=h^{\frac{n}{4}+1}\int x^2 e^{-x^2}\psi(x)dx\not =\mathcal{O}(h^{\infty})$ 
for $0\le\psi\in C_c^{\infty}(\mathbb{R}^n)$.
\end{remark}
\begin{proof}[Proof of Proposition 3]
We set $u(x)=b(x)e^{\frac{i}{h}\phi(x)}$.

(1). The upper bound is easy. Take any $(x_0, \xi_0)$ from the complement of the right hand side of (1).
If $x_0\not\in \mathrm{supp}b$, or $\mathrm{Im} \phi(x_0)\not=0$, 
then $(x_0,\xi_0)\not\in\mathrm{WF}_h(u)$ since $u$ is 
$\mathcal{O}(h^{\infty})$ near $x_0$. If $\xi_0\not=\partial \phi(x_0)$, we see that $(x_0,\xi_0)\not\in\mathrm{WF}_h(u)$ 
using the differential operator $L=\frac{hD_x}{\xi-\partial\phi(x)}$, which is well defined near $(x_0, \xi_0)$, and the integration by parts argument.

(2). Note that $\mathrm{WF}_h(u)$ is closed. Suppose that $b(x_0)\not=0$ 
and $\mathrm{Im}\phi(x_0)=0$. We now show $(x_0,\partial\phi(x_0))\in \mathrm{WF}_h(u)$.
We may assume that $x_0=0$ , $\phi(x_0)=0$ and $\partial \phi(x_0)=0$. Then Taylor's theorem implies that 
\[0\le \mathrm{Im}\phi(x)\le C|x|^2, \mspace{6mu} |\mathrm{Re}\phi(x)|\le C|x|^2 \]
near $0$. We take $\psi \in C^{\infty}_c(\mathbb{R}^n)$ such that
$\psi \ge 0$ and $\psi(0)=1$. If we take $\mathrm{supp}\psi$ sufficiently close to $0$, 
\[|\mathrm{Re}\phi(x)|\le C|x|^2 \le \frac{\pi}{6},
\mspace{6mu} |\arg b(x)/b(0)|\le \frac{\pi}{6}\]
on the support of $\psi$. If $(0,0)\not\in \mathrm{WF}_h(u)$, Corollary 2 implies that 
\begin{align*}
(u, \psi_{0,0,h})_{L^2}&=
h^{-\frac{n}{4}}\int \psi(h^{-\frac{1}{2}}x)b(x)e^{i\phi(x)/h} dx\\
&=h^{\frac{n}{4}}\int \psi(x)b(h^{\frac{1}{2}}x)e^{i\phi(h^{\frac{1}{2}}x)/h} dx
=\mathcal{O}(h^{\infty}).
\end{align*}
Since $|\arg b(h^{\frac{1}{2}}x)/b(0)|\le \frac{\pi}{6}$ and
\[|\mathrm{Re}\phi(h^{\frac{1}{2}}x)/h|\le C|h^{\frac{1}{2}}x|^2/h =C|x|^2 \le \frac{\pi}{6}\]
on the support of $\psi$, the integrand $\psi(x)b(h^{\frac{1}{2}}x)e^{i\phi(h^{\frac{1}{2}}x)/h}$ takes its values in the sector 
$\{z \in \mathbb{C}| |\arg z-\arg b(0)|\le \frac{\pi}{3}\}$ and Lemma 3 implies  
\[|\int \psi(x)b(h^{\frac{1}{2}}x)e^{i\phi(h^{\frac{1}{2}}x)/h} dx|
\ge \frac{1}{2}\int \psi(x)|b(h^{\frac{1}{2}}x)|e^{-\mathrm{Im}\phi(h^{\frac{1}{2}}x)/h} dx.\]
Since we also have $0\le \mathrm{Im}\phi(h^{\frac{1}{2}}x)/h\lesssim 1$ as in the case of the real part,
we obtain by Fatou's lemma, 
 \[\liminf_{h\to 0} \int \psi(x)|b(h^{\frac{1}{2}}x)|e^{-\mathrm{Im}\phi(h^{\frac{1}{2}}x)/h} dx
\gtrsim \int \psi(x)|b(0)|dx>0\]
which is a contradiction.
\end{proof}

\section*{Acknowledgement}
The author is grateful to his adviser Shu Nakamura and Professor Maciej Zworski for comments and encouragement. 
He is also grateful to the anonymous referee for suggestions to improve the manuscript.
The author is under the support of the FMSP program at the Graduate School of Mathematical Sciences, University of Tokyo.

Graduate School of Mathematical Sciences, University of Tokyo,
 3-8-1, Komaba, Meguro-ku, Tokyo 153-8914, Japan
 
 kameoka@ms.u-tokyo.ac.jp

\end{document}